\newtheorem{theorem}{Theorem}[section]
\newtheorem{lem}[theorem]{Lemma}
\newtheorem{cor}[theorem]{Corollary}
\numberwithin{equation}{section}
\newcommand{\e}{\varepsilon}
\renewcommand{\o}{\omega}
\renewcommand{\O}{\Omega}
\renewcommand{\a}{\alpha}
\renewcommand{\P}{\mathbb{P}}
\newcommand{\N}{\mathbb{N}}
\newcommand{\Z}{\mathbb{Z}}
\renewcommand{\leq}{\leqslant}
\renewcommand{\geq}{\geqslant}
\newcommand{\bfZ}{\mathbf Z}
\newcommand{\sumprime}{\sideset{}{'}\sum}
\newcommand{\sssum}{\sideset{}{''}\sum}
\newcommand{\NN}{{\mathbb N}}
\newcommand{\PR}{\mathbb{P}}
\newcommand{\er}{\ensuremath{\mathrm{e}}}
\renewcommand{\pmod}[1]{\allowbreak\mkern7mu({\operator@font mod}\,\,#1)}
\newcommand{\be}{\begin{equation}}
\newcommand{\ee}{\end{equation}}
\renewcommand{\a}{\ensuremath{\alpha}}
\newcommand{\eps}{\ensuremath{\varepsilon}}
\renewcommand{\le}{\leqslant}
\renewcommand{\leq}{\leqslant}
\renewcommand{\ge}{\geqslant}
\renewcommand{\geq}{\geqslant}
\newcommand{\pfrac}[2]{\left(\frac{#1}{#2}\right)}  
\renewcommand{\ssum}[1]{\sum_{\substack{#1}}}  
\begin{document}

\title{Sets whose differences avoid squares modulo $m$}
\author{Kevin Ford, Mikhail R. Gabdullin}
\date{New estimates for sets whose differece avoids squares}
\address{Department of Mathematics, 1409 West Green Street, University
of Illinois at Urbana-Champaign, Urbana, IL 61801, USA}
\email{ford@math.uiuc.edu}
\address{
Steklov Mathematical Institute,
Gubkina str., 8, Moscow, Russia, 119991}
\email{gabdullin.mikhail@yandex.ru, gabdullin@mi-ras.ru} 

\begin{abstract}
	We prove that if $\e(m)\to0$ arbitrarily slowly, then for almost all $m$ and any $A\subset\Z_m$ such that $A-A$ does not contain non-zero quadratic residues we have
	$|A|\leq m^{1/2-\e(m)}.$
\end{abstract}

\date{\today}

\maketitle


\section{Introduction}

Let $R_m=\{a^2: a\in\Z_m \}$ be the set of quadratic residues modulo $m$. In this paper we find an upper bound for the sets $A\subset\Z_m$ with
\begin{equation}\label{0.1}
(A-A)\cap R_m = \{0 \},
\end{equation}
where $A-A=\{a-b: a,b\in A \}$, for a large set of $m$. This question originated from the corresponding problem in $\Z$\,: Ruzsa \cite{R} constructed a set of integers $B\subset[1,N]$ such that $B-B$ avoids squares with $|B|\gg N^{\gamma}$, where $\gamma=\frac12(1+\frac{\log 7}{\log 65})=0.73...$, and this construction was based on a $7$-element subset of $\Z_{65}$ obeying (\ref{0.1}) (see \cite{Le} for further improvements). As for upper bounds in this integer setting, we just note that such a set $B$ must obey $|B|=o(N)$ (see \cite{Sa}, \cite{PSS}, and also \cite{BPPS}), but no bounds with power saving are known.

Now we begin a discussion of (\ref{0.1}). In the case of a prime $m\equiv3\pmod{4}$, $(\frac{-1}{m})=-1$ and thus any set $A\subset\Z_m$ with (\ref{0.1}) is a singleton or empty at all, and so the problem is trivial. In contrast, in the case of a prime $m=p\equiv1\pmod{4}$ it should be very hard to obtain good bounds, since the problem is related to two other famous questions. The first one is the clique number problem for the Paley graph. Recall that the Paley graph is the graph $G_p=(V,E)$ with $V=\Z_p$ and $\{a,b\}\in E$ iff $a-b$ is a quadratic residue modulo $p$, and a clique of an undirected graph is a subset of its vertices such that every two distinct vertices in the clique are adjacent (that is, its induced subgraph is complete). The clique number of a graph is the size of its maximum clique. Fix any quadratic non-residue $\xi\in\Z_p\setminus R_p$; then $C\subset\Z_p$ is a clique of $G_p$ if and only if $\xi C$ obeys (\ref{0.1}), and so any bound for the clique number is a bound for our sets, and vice versa. While it is not hard to show that any clique in this graph (and, hence, any set $A\subset\Z_p$ with (\ref{0.1})) has size at most $p^{1/2}$ (see Section \ref{Sec2} for a short proof), any improvement of it requires non-trivial ideas; currently the best upper bound is $\sqrt{p/2}+1$ (see \cite{HP}). The second problem related to our sets is finding an upper bound for the least quadratic non-residue. If we denote it by $n(p)$, then the set $\xi\cdot\{1,\ldots,n(p) \}$, where $\xi\in\Z_p\setminus R_p$ is again any quadratic non-residue, has size $n(p)$ and obeys (\ref{0.1}), and so any bound for sets with (\ref{0.1}) is a bound for $n(p)$, 
but the best we know is $n(p)\ll p^{1/4\sqrt{e}+o(1)}$, due to the work \cite{Bu} (see also the classical papers \cite{V} and \cite{Li}).   

We turn to the case of composite $m$. Matolcsi and Ruzsa \cite{MR} proved that $|A|\leq m^{1/2}$ for all $A\subset\Z_m$ with (\ref{0.1}) and square-free $m$ which have prime divisors of the form $4k+1$ only, and that $|A|\leq m\exp(-c\sqrt{\log m})$ for all square-free $m$ (throughout the paper we denote by $c$ absolute positive constants which may vary from line to line). The second author \cite{G} proved that for any square-free $m$ and such $A$ we have 
$$|A|\leq \min \left\{ m^{1/2}(3\o(m))^{3\o(m)/2}, \, m\exp\left(-\frac{c\log m}{\log\log m}\right)\right\},
$$
where $\o(m)$ is the number of prime divisors of $m$.
Since $\o(m)\le 2\log\log m$ for almost all $m$ (due to Hardy and Ramanujan \cite{HR})
we deduce  that $|A|\leq m^{1/2+o(1)}$ for almost all $m$ (that is, for a set of density $1$; here and in what follows we consider the lower asymptotic density of a set $M\subseteq \N$, which is defined as $\varliminf_{N\to\infty}\frac{\#(M\cap\{1,...N\})}{N}$). 

\smallskip 

In this paper we overcome this square-root barrier for almost all moduli. 
For a positive integer $m$, we denote by $\o_3(m)$ the number of its prime divisors of the form $4k+3$. Our main results are the following.

\begin{theorem}\label{th1.1} 
	Let $m$ be square-free and let $A\subset\Z_m$ obey (\ref{0.1}). Then  
	$$|A|\leq m^{1/2}q^{-1/2}(10\o(m))^{2\o(m)},
	$$	
	where $q$ denotes the least prime divisor of $m$ of the form $4k+3$ if $\o_3(m)$ is odd, and $q=1$ otherwise. 	
\end{theorem}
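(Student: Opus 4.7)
The plan is to combine a Cauchy--Schwarz / quadratic character argument in the spirit of Matolcsi--Ruzsa~\cite{MR} with an inductive fiber analysis as in~\cite{G}, adding a structural observation that extracts the extra saving $q^{-1/2}$ from a single prime $q \equiv 3 \pmod 4$.

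Setting $k = \omega(m)$, I would first use CRT to write $\Z_m \cong \prod_{i=1}^{k} \Z_{p_i}$ and introduce, for each $S \subseteq [k]$, the real quadratic character $\chi_S(x) = \prod_{i \in S}\!\left(\frac{x_i}{p_i}\right)$ (with the convention $\chi_i(0) = 0$). The elementary identity $\prod_i (1 + \chi_i(x_i)) = 2^{|\{i : x_i \neq 0\}|}$ when $x$ is a square modulo $m$ and $0$ otherwise, combined with (\ref{0.1}) and expansion, yields
\[
|A|^2 - |A| \;=\; -\!\!\!\sum_{\emptyset \neq S \subseteq [k]}\;\sum_{a,b \in A} \chi_S(a-b).
\]
I would then bound each inner sum via Cauchy--Schwarz together with the CRT evaluation
\[
\sum_{a \in \Z_m} \chi_S(a - b) \chi_S(a - c) \;=\; \frac{m}{m_S} \prod_{i \in S}\bigl(p_i \mathbf{1}_{b_i = c_i} - 1\bigr), \qquad m_S = \prod_{i \in S} p_i,
\]
whose expansion is a signed combination of collision counts $N_T(A) = \#\{(b, c) \in A^2 : \pi_T(b) = \pi_T(c)\}$ over $T \subseteq S$.

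To control the collision counts I would use the following fibration observation: if $\pi_T(a) = \pi_T(a')$ and $a \neq a'$, then $a - a'$ has vanishing $T$-coordinates, so being a nonzero nonsquare modulo $m$ forces it to be a nonzero nonsquare modulo $m/m_T$. Hence each fiber $\pi_T^{-1}(y) \cap A$, viewed in $\Z_{m/m_T}$, again satisfies (\ref{0.1}), giving $|A_y| \leq f(m/m_T)$, where $f(n)$ denotes the optimal bound. Induction on $\omega(m)$, with base case the standard $\sqrt{p}$ estimate for a single prime (cf.\ Section~\ref{Sec2}), together with careful accounting of the combinatorial overhead from the Cauchy--Schwarz steps, then produces $|A|^2 \leq m (10\omega(m))^{4\omega(m)}$, essentially the bound of \cite{G}.

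Finally, to extract the extra factor $q^{-1}$, I would exploit that for any prime $q \equiv 3 \pmod 4$ any $B \subseteq \Z_q$ satisfying (\ref{0.1}) is a singleton, since $\chi_q(-1) = -1$. Writing $m = q m'$ with $q$ the smallest $\equiv 3 \pmod 4$ prime divisor of $m$, I would analyze the fibration $\pi_q \colon A \to \Z_q$: for distinct $y_1, y_2 \in \pi_q(A)$ with $y_1 - y_2$ a nonzero square modulo $q$, (\ref{0.1}) forces every cross-difference between the $\Z_{m'}$-projections $\tilde A_{y_1}, \tilde A_{y_2} \subseteq \Z_{m'}$ of the corresponding fibers to be a nonzero nonsquare modulo $m'$; in particular, these projected fibers are disjoint. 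Incorporating this additional disjointness into the character-sum bound for the $S$'s containing the index of $q$ should contribute an extra factor of $q$ in the denominator after Cauchy--Schwarz, yielding the claimed $|A|^2 \leq m q^{-1}(10\omega(m))^{4\omega(m)}$. The main obstacle I foresee is propagating the cross-fiber constraint when $\omega_3(m) \geq 3$: since $-1$ is not then a square modulo $m'$, only the ``forward'' direction of the ordered cross-difference is automatically nonsquare, and extracting the full $q^{-1}$ saving will likely require a careful tournament-style argument on $\pi_q(A) \subseteq \Z_q$ combined with the iterated character estimates from the earlier steps.
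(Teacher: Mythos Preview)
Your setup through Cauchy--Schwarz, the evaluation of $\sum_{a}\chi_S(a-b)\chi_S(a-c)$, and the fiber bound $|A_y|\le f(m/m_T)$ by induction are exactly the machinery the paper uses (its Lemma~2.2 is your collision-count estimate). Where your proposal diverges, and where the gap lies, is in the mechanism for extracting the extra factor $q^{-1/2}$.

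You propose to fix the single prime $q$ and exploit the cross-fiber constraint on $\pi_q^{-1}(y_1),\pi_q^{-1}(y_2)$ when $y_1-y_2$ is a square modulo $q$. But, as you yourself note, for a nonresidue difference $y_1-y_2$ there is no constraint at all on the $\Z_{m'}$-projections, so only about half of the ordered pairs in $\pi_q(A)^2$ carry information; turning this into a clean $q^{-1}$ saving inside the Cauchy--Schwarz bound is not at all clear, and the ``tournament-style argument'' you allude to is not specified. In the paper this obstacle is bypassed by a different and much simpler observation: in the expansion $|A|^2-|A|=-\sum_{D\ne\varnothing}\sum_{a,b}\chi_D(a-b)$, every $D$ with $\chi_D(-1)=-1$ (equivalently $\o_3(p_D)$ odd) contributes zero, since $\chi_D(a-b)+\chi_D(b-a)=0$. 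Thus one only sums over ``proper'' $D$ with $\o_3(p_D)$ even. For such $D$ the complementary modulus $m/p_D$ has $\o_3(m/p_D)$ of the same parity as $\o_3(m)$, so when $\o_3(m)$ is odd the inductive bound on the fiber already carries a factor $q_D^{-1/2}\le q^{-1/2}$. The saving therefore comes automatically from the induction hypothesis once the summation is restricted, not from any new cross-fiber analysis over the distinguished prime $q$. The only genuine complication is that inside the bound for $S_D$ one also meets inner sums over $D'\subseteq D$ that need not be proper; the paper handles these by writing each non-proper $D'$ as $D_1\setminus\{p_j\}$ with $D_1$ proper and $p_j\equiv 3\pmod 4$, $p_j\ge q$, recovering the factor $q^{-1/2}$ that way. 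Your proposal does not contain this parity-restriction idea, and without it the route you sketch does not close.
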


It is an improvement (due to the factor $q^{-1/2}$) of the mentioned result of \cite{G} (note that we obtain worse constants $10$ and $2$ instead of $3$ and $3/2$, but it is not so important). While it is not useful directly for all moduli, it allows us (using some ``truncation'' trick) to obtain a bound $o(m^{1/2})$ for almost all $m$. 
 
\begin{theorem}\label{th1.2}
			Let $\e\in[(\log x)^{-1/2},1]$ and $c(\e)=\exp(-(\log\e^{-1})^{1/10})$. Then for all but $O(c(\e)x)$  numbers $m\leq x$ and any $A\subset\Z_m$ with (\ref{0.1}) we have 
$$|A|\leq m^{1/2-\e/5}.
$$
\end{theorem}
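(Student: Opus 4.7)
\medskip\noindent\textbf{Proof proposal.} The plan is to apply Theorem~\ref{th1.1} on a well-chosen square-free divisor $\ell\mid m$ via a CRT/fibre decomposition, and then show that a suitable $\ell$ exists for all but $O(c(\e)x)$ values of $m\leq x$. First I would prove the following \emph{fibre lemma}: if $\ell\mid m$ is square-free with $(\ell,m/\ell)=1$, writing $\Z_m\cong\Z_\ell\times\Z_{m/\ell}$ via CRT and partitioning $A=\bigsqcup_{v\in\Z_{m/\ell}}A_v$ by the second coordinate, each $A_v$ viewed as a subset of $\Z_\ell$ inherits (\ref{0.1}) modulo $\ell$, because $0\in R_{m/\ell}$ and so every non-zero quadratic residue modulo $\ell$ lifts to one modulo $m$. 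Applying Theorem~\ref{th1.1} to each fibre and summing gives
\[
|A|\;\leq\;\frac{m}{\sqrt{\ell\,q(\ell)}}\,(10\o(\ell))^{2\o(\ell)},
\]
with $q(\ell)$ as in Theorem~\ref{th1.1}. So it suffices to build, for almost every $m\leq x$, a square-free $\ell\mid m$ with $(\ell,m/\ell)=1$, $\o_3(\ell)$ odd, $\o(\ell)$ controlled, and $\ell\,q(\ell)\geq m^{1+2\e/5+\eta}$ for some small positive $\eta$.

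Set $\alpha=\tfrac{4\e}{5}+s$ with slack $s$ to be fixed below, and $M=m^\alpha$. For each $m$ let $\ell_0(m)$ be the product of all primes $p\,\|\,m$ that are either $\equiv 1\pmod 4$ or are $\equiv 3\pmod 4$ with $p\geq M$; then $\ell_0$ is square-free, $(\ell_0,m/\ell_0)=1$, and $q(\ell_0)\geq M$ whenever $\o_3(\ell_0)$ is odd. When the parity of $\o_3(\ell_0)$ is wrong, I would restore it by \emph{adjoining} one further prime $p_*\equiv 3\pmod 4$ with $p_*\,\|\,m$ chosen from a dyadic interval inside $[m^{2\e/5},M)$, so that $q(\ell)=p_*$ and still $\ell\,q(\ell)=\ell_0\,p_*^2\geq m^{1+2\e/5+\eta}$. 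Call $m$ \emph{good} if: (i) its squareful part is $\leq m^{\e/20}$; (ii) $f(m):=\sum_{p\|m,\,p\equiv 3(4),\,p<M}\log p\leq(\alpha-\tfrac{2\e}{5}-\tfrac{\e}{10})\log m$; (iii) $\o(m)\leq C\log\log m$ for a suitable absolute $C$; and (iv) whenever needed, the prime $p_*$ above exists. For good $m$ the fibre estimate collapses to $|A|\leq m^{1/2-\e/5}$, with the factor $(10\o(\ell))^{2\o(\ell)}\leq m^{o(1)}$ absorbed by the slack.

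To control the exceptional set, the densities of $m$ failing (i), (iii), (iv) are bounded by Rankin's trick on squareful integers, the Hardy--Ramanujan theorem, and Mertens' theorem for primes $\equiv 3\pmod 4$ (with a union bound over several dyadic intervals) respectively, and each of these is much smaller than $c(\e)$ for an appropriate choice of parameters. The delicate bound is (ii): $f$ is a strongly additive, prime-indicator function with $\E f\sim\tfrac{\alpha}{2}\log m$ and $\Var f\sim\tfrac{1}{4}(\alpha\log m)^2$, and Kubilius-type moderate-deviation bounds produce $\Pr[f\geq\E f+t]\leq\exp(-ct^2/\Var f)$; plugging in $t=\tfrac{s}{2}\log m$ yields density $\leq\exp(-c(s/\e)^2)$, and the choice $s\asymp\e(\log\e^{-1})^{1/20}$ produces precisely the density $\exp(-(\log\e^{-1})^{1/10})=c(\e)$.

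The main obstacle is the triple balance (large $\ell$, large $q(\ell)$, small $\o(\ell)$) compounded by the parity constraint on $\o_3(\ell)$. The naive parity fix---dropping the smallest $\equiv 3\pmod 4$ prime of $\ell_0$---would cost a factor of $M$ in $\ell$ and erase the entire saving, so the parity must be restored by adjoining a prime from a carefully calibrated dyadic range; guaranteeing this prime's existence for almost all $m$ forces a union bound over several such ranges and a case analysis of the corresponding lower bounds for $\ell\,q(\ell)$. Obtaining the unusual exponent $1/10$ in $c(\e)$ also requires a genuine moment / Erd\H os--Kac input rather than mere Chebyshev, because $\Var f$ is comparable to $(\E f)^2$.
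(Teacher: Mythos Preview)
Your fibre lemma (Lemma~\ref{lem3.1} plus Theorem~\ref{th1.1}) and the shape of the divisor $\ell_0$ match the paper's reduction exactly: the paper also takes $m_2=m/(P(m)\cdot d_{2j-1})$, where $d_{2j-1}$ is the product of the primes $q\equiv3\pmod4$ dividing $m$ below a threshold, so the high-level architecture is right. The gap is entirely in your treatment of condition~(ii), and it is fatal as stated.

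You assert a sub-Gaussian tail $\Pr[f\ge\E f+t]\le\exp(-ct^2/\Var f)$ and then compute $t^2/\Var f\asymp(s/\e)^2$. Both steps fail. First, with $\alpha=\tfrac{4\e}{5}+s$ and $\Var f\sim\tfrac14\alpha^2(\log m)^2$ (your own formula), one gets
\[
\frac{t^2}{\Var f}\;=\;\frac{(s/2-\e/10)^2(\log m)^2}{\tfrac14\alpha^2(\log m)^2}\;=\;\frac{(s-\e/5)^2}{(4\e/5+s)^2}\;<\;1
\]
for \emph{every} choice of $s$; you have silently replaced $\alpha$ by $\e$ in the denominator, but with $s\asymp\e(\log\e^{-1})^{1/20}$ one has $\alpha\sim s$, not $\e$. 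So even granting the Gaussian tail, the exponent is $O(1)$, not $(\log\e^{-1})^{1/10}$. Second, the sub-Gaussian tail itself is not available: the summands $\log p$ range up to $\log M=\alpha\log m\asymp\sigma_f$, so Bernstein (or any Kubilius-type bound) gives only $\exp\bigl(-ct/\log M\bigr)$ in the regime $t\gtrsim\sigma_f$, again $e^{-O(1)}$. Concretely, a single prime $p\equiv3\pmod4$ with $p\in(M^{1-\delta},M)$ already forces $f(m)>(1-\delta)\log M$, and such a prime divides $m$ with probability $\asymp\delta$; hence $\Pr[f>(1-\delta)\log M]\gtrsim\delta$ and condition~(ii) fails with probability bounded away from $0$ for any fixed threshold of the form $(1-c)\log M$.

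The paper avoids this obstruction not by sharpening the tail bound for a fixed threshold $M$, but by \emph{scanning many thresholds simultaneously}. One sets up $J\asymp(\log\e^{-1})/\log\log\e^{-1}$ logarithmically spaced levels $y_J<\cdots<y_0$ in $(x^{\e},x^{\sqrt\e}]$ with ratio $\theta=C(\log\e^{-1})^{1/10}$ between consecutive $\log y_j$, models the counts $\omega(m,T_j)$ of primes $\equiv3\pmod4$ in each layer as independent Poisson (Lemma~\ref{dTV}), and shows that with probability $1-O(c(\e))$ some four consecutive layers exhibit the pattern $(0,1,0,1)$. That pattern delivers \emph{two} isolated primes $q'$ and $q''$ with $D(m,q')\le y_{j+3}^{\theta/2}\le (q')^{1/2}$ (via a uniform smooth-part bound, condition~(iv)), and one of $q',q''$ has odd index, handling the parity without any adjoining trick. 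The exponent $1/10$ in $c(\e)$ arises from $\theta$ and the number of independent $4$-blocks, not from a second-moment estimate on $f$.
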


We immediately conclude the following.

\begin{cor}\label{cor1.3}
Let $\e(m)\to0$ arbitrarily slowly. Then 
$$|A|\leq m^{1/2-\e(m)}
$$
for almost all $m$ and $A\subset\Z_m$ with (\ref{0.1}).
\end{cor}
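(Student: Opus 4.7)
The corollary should follow from Theorem~\ref{th1.2} by a standard dyadic argument, exploiting the fact that the exceptional density $c(\e)$ in Theorem~\ref{th1.2} tends to $0$ as $\e\to 0$. The plan is to apply that theorem at every dyadic scale with a parameter matching the size of $\e(m)$ there, and then sum the small exceptional counts.

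First I would replace the given $\e(m)$ by the slightly larger function
$$\tilde\e(m):=\max\Bigl(\sup_{k\ge m}\e(k),\ (\log m)^{-1/2}\Bigr),$$
which is non-increasing, still tends to $0$ arbitrarily slowly, dominates $\e$, and is bounded below by $(\log m)^{-1/2}$. Since $\tilde\e\ge\e$, the bound $|A|\le m^{1/2-\tilde\e(m)}$ is stronger than the one claimed, so it suffices to prove the corollary with $\tilde\e$ in place of $\e$.

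Fix a large $x$ and set $J=\lfloor\log_2 x\rfloor$. I would decompose $\{1,\dots,x\}$ into the dyadic blocks $I_j=(2^{j-1},2^{j}]$ for $1\le j\le J$, and define $\e_j:=5\tilde\e(2^{j-1})$. The lower bound built into $\tilde\e$ guarantees $\e_j\ge(\log 2^j)^{-1/2}$ for $j$ large, so Theorem~\ref{th1.2} applied at scale $2^j$ with parameter $\e_j$ produces at most $O\bigl(c(\e_j)\,2^j\bigr)$ integers $m\le 2^j$ for which some $A$ satisfies~(\ref{0.1}) together with $|A|>m^{1/2-\e_j/5}$. For $m\in I_j$, monotonicity of $\tilde\e$ gives $\tilde\e(m)\le\tilde\e(2^{j-1})=\e_j/5$, so any $m\in I_j$ violating $|A|\le m^{1/2-\tilde\e(m)}$ is in particular a violation of $|A|\le m^{1/2-\e_j/5}$, and is therefore counted by Theorem~\ref{th1.2}.

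Summing over $j\le J$, the total count of exceptional $m\le x$ is $O\!\bigl(\sum_{j\le J}c(\e_j)\,2^j\bigr)$. Since $\tilde\e(2^{j-1})\to 0$ we have $c(\e_j)=\exp\bigl(-(\log\e_j^{-1})^{1/10}\bigr)\to 0$, and the elementary Cesaro-type observation that $a_j\to 0$ forces $\sum_{j\le J}a_j\,2^j=o(2^J)$ then converts this into $o(x)$, which is precisely density zero. I do not foresee a genuine obstacle here: the substantive work has been carried out in Theorem~\ref{th1.2}, and the only technical delicacy is the $(\log m)^{-1/2}$ safeguard needed to keep the parameters $\e_j$ within the admissible range of that theorem.
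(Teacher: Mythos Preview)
Your argument is correct. The paper itself gives no proof beyond the phrase ``We immediately conclude the following,'' so there is nothing substantive to compare against; your dyadic application of Theorem~\ref{th1.2} is a valid way to fill in the details.

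That said, the dyadic machinery is slightly more than is needed. A one-line version runs as follows: fix any $\delta>0$; for all sufficiently large $m$ one has $\e(m)<\delta/5$, so any $m$ violating $|A|\le m^{1/2-\e(m)}$ also violates $|A|\le m^{1/2-\delta/5}$, and Theorem~\ref{th1.2} (with parameter $\delta$) bounds the count of such $m\le x$ by $O(c(\delta)x)$. Hence the upper density of exceptional $m$ is $O(c(\delta))$ for every $\delta>0$, and letting $\delta\to 0$ gives density zero. This is presumably what ``immediately'' refers to. Your replacement $\tilde\e$ and the dyadic sum achieve the same end but with an extra layer of bookkeeping; the content is identical.
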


We now discuss possible improvements of this bound. For $\eta\in(0,1)$, we set
$$
M_{\eta}=\{m\in\N: \mbox{ for any $A\subset\Z_m$ with (\ref{0.1})  the bound $|A|\leq m^{\eta}$ holds }  \} .
$$
Using this notation, we can reformulate the mentioned corollary from \cite{G} as follows: for any $\e>0$ the set $M_{1/2+\e}$ has density $1$. Theorem \ref{th1.2} can also be presented in terms of these sets $M_{\eta}$: it means that the density of the set $M_{1/2-\e}$ tends to $1$ as $\e\to0$.

Note that in the case $m=p^2$, where $p$ is a prime, $\Z_{p^2}$ contains the set $\{0,p,2p,...,(p-1)p\}$ which has size $m^{1/2}$ and obeys (\ref{0.1}) (see also Proposition 5.1 of \cite{Y} for a more general statement). Nevertheless, it is believed that for any square-free $m$ and $A\subset\Z_m$  with (\ref{0.1}) the bound $|A|\ll_{\e}m^{\e}$ holds for any $\e>0$, and, hence, that the set $M_{\e}$ has density $1$. While this hypothesis seems to be far beyond the reach of currect methods, one can prove the following weak form of it.

\begin{theorem}\label{th1.4}
For any $\e>0$ the set $M_{\e}$ has positive density. 	
\end{theorem}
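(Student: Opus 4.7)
The plan is to exhibit a positive-density subset of $M_{\e}$ via a short Chinese Remainder Theorem fibre argument, rather than invoking Theorem \ref{th1.1}. Since $M_{\eta_1}\subseteq M_{\eta_2}$ whenever $\eta_1\le\eta_2$, it suffices to prove the claim at arbitrarily small $\e\in(0,1/4)$.

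First I would establish the following fibre bound: if $m=qn$ with $q\equiv 3\pmod 4$ prime and $\gcd(q,n)=1$, then any $A\subseteq\Z_m$ obeying (\ref{0.1}) satisfies $|A|\le n$. Identify $\Z_m\cong\Z_q\times\Z_n$ via the Chinese Remainder Theorem; then $t\in R_m$ if and only if its two components lie in $R_q$ and $R_n$ respectively. For each $y\in\Z_n$, set $A_y=\{x\in\Z_q:(x,y)\in A\}$. For distinct $x_1,x_2\in A_y$ the difference in $\Z_q\times\Z_n$ is $(x_1-x_2,0)$, and $0\in R_n$, so this element lies in $R_m$ exactly when $x_1-x_2\in R_q$; condition (\ref{0.1}) therefore forces $x_1-x_2$ to be a quadratic non-residue mod $q$. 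But then $x_2-x_1=(-1)(x_1-x_2)$ is a product of two non-residues (using $q\equiv 3\pmod 4$) and hence a residue, contradicting (\ref{0.1}) applied to the reversed pair. Thus $|A_y|\le 1$ for every $y$, and $|A|\le n=m/q$.

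Next I would introduce $S_{\e}=\{m\in\N:\exists\, q\mid m,\ q\equiv 3\pmod 4\ \text{prime},\ q\ge m^{1-\e}\}$ and observe that $S_{\e}\subseteq M_{\e}$: for $\e<1/2$, any such $q$ is unique (two divisors each exceeding $m^{1-\e}$ would multiply to more than $m$) and automatically satisfies $q^2\nmid m$ (else $m\ge q^2>m^{2-2\e}>m$), so $\gcd(q,m/q)=1$ and the fibre bound yields $|A|\le m/q\le m^{\e}$. Finally I would show that $S_{\e}$ has positive density by a routine count: parameterise $m=qn$ with $n\le x^{\e/2}$ and $q$ prime $\equiv 3\pmod 4$ in the interval $(x^{1-\e},x/n]$; each such pair gives a distinct $m\le x$ lying in $S_{\e}$, and by the prime number theorem in arithmetic progressions
\[
\#\{m\le x:m\in S_{\e}\}\ge\sum_{n\le x^{\e/2}}\bigl(\pi(x/n;4,3)-\pi(x^{1-\e};4,3)\bigr)\gg\frac{x}{\log x}\sum_{n\le x^{\e/2}}\frac1n\gg \e x.
\]

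The main conceptual content is the fibre argument, which is immediate from $-1$ being a non-residue mod $q\equiv 3\pmod 4$; no serious obstacle arises, since the density bound is a standard large-prime-factor count combined with equidistribution in arithmetic progressions. The trade-off is that this approach yields the much sharper bound $|A|\le m^{\e}$ but only on a sparse positive-density set of $m$, in contrast to the $m^{1/2-\e/5}$ bound for almost all $m$ from Theorem \ref{th1.2}.
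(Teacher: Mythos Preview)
Your proof is correct and follows essentially the same approach as the paper: both identify the set of $m$ possessing a prime factor $q\equiv 3\pmod 4$ with $q\ge m^{1-\e}$ as a positive-density subset of $M_\e$. Your fibre argument is a direct proof of the special case of Lemma~\ref{lem3.1} that the paper invokes (with $m_2=q$ and $g(q)=1$), and your density count via the prime number theorem in progressions is a minor variant of the paper's count via Mertens' theorem for progressions, which yields the explicit density $\tfrac12\log\frac{1}{1-\e}+o(1)$.
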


Finally, we mention a lower bound for almost all moduli.

\begin{theorem}\label{th1.5}
	For almost all $m$ there exists a set $A\subset\Z_m$ with \\  $(A-A)\cap R_m=\{0\}$ and
	$$|A|\geq \exp(0.375(\log\log m)^2(1+o(1))).
	$$
\end{theorem}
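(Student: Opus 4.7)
The natural approach is to construct $A$ via a Chinese Remainder Theorem (CRT) product over those prime divisors $p$ of $m$ with $p \equiv 1 \pmod{4}$. For each such $p$, since $-1 \in R_p$, the problem of finding $S_p \subset \Z_p$ with $(S_p - S_p) \cap R_p = \{0\}$ is exactly the problem of finding an independent set in the Paley graph on $\Z_p$. A simple local construction multiplies the interval $\{1, 2, \dots, n(p)-1\}$ by a fixed quadratic non-residue, yielding a set of size $n(p)-1$; more generally, a greedy/probabilistic argument in the Paley graph gives $|S_p| \gg \log p$, and sharper polylogarithmic bounds $|S_p| \gg (\log p)^{c}$ for some $c>1$ may be available. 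I would then let $A$ be the CRT image of $\prod_{p\mid m,\, p\equiv 1\pmod{4}} S_p$ inside $\Z_m$; if $a, b \in A$ are distinct they differ in some coordinate $p$ where $(a-b)\bmod p \in S_p - S_p$ is a nonzero non-residue modulo $p$, so $a - b \notin R_m \setminus \{0\}$.

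To quantify $|A|$ I would use the typical structure of the prime divisors of $m$. By Hardy--Ramanujan, $\omega(m)=(1+o(1))\log\log m$ for almost all $m$; moreover the ordered prime divisors obey $\log\log p_i = (1+o(1))i$ on average, and the residues $p_i\bmod 4$ behave as independent fair coin flips of density $1/2$. The local bound then gives
\[
\log|A| = \sum_{\substack{p\mid m\\ p\equiv 1\pmod{4}}} \log|S_p| \;\geq\; c \sum_{\substack{p\mid m\\ p\equiv 1\pmod{4}}} \log\log p,
\]
and using
\[
\sum_{\substack{p\le y\\ p\equiv 1\pmod{4}}} \frac{\log\log p}{p} \;=\; \tfrac14 (\log\log y)^2 + O(\log\log y)
\]
together with a Tur\'an--Kubilius type concentration argument, the right-hand side concentrates at $\tfrac{c}{4}(\log\log m)^2(1+o(1))$ for almost all $m$.

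The main obstacle is therefore the local lower bound in the first step: the trivial Paley-graph independence bound $\gg\log p$ only gives exponent $c=1$ and hence the weaker constant $\tfrac14$. Reaching the target $\tfrac38$ requires an effective improvement by a factor $3/2$ at the log-log scale, i.e., $|S_p|\gg(\log p)^{3/2}$ for almost all primes $p\equiv 1\pmod{4}$ dividing $m$. This could come from a refined character-sum / Graham--Ringrose style estimate, an explicit algebraic construction for Paley-graph independent sets, or from a non-product construction that also exploits the primes $p\equiv 3\pmod{4}$ dividing $m$ (which individually give only singletons, but can combine with other primes to boost the total size, as illustrated by the fact that $|A|=3 > 2\cdot 1$ is attained in $\Z_{21}$).
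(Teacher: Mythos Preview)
Your CRT product over the primes $p\equiv 1\pmod 4$ dividing $m$, together with the Paley-graph bound $|S_p|\gg\log p$ and the Kubilius-type concentration of $\sum_{p\mid m,\,p\equiv1\pmod4}\log\log p$, is exactly right and does give $\log|A|\ge \tfrac14(\log\log m)^2(1+o(1))$. The gap is the step from $\tfrac14$ to $\tfrac38$. Your proposed fix, $|S_p|\gg(\log p)^{3/2}$ for generic primes $p\equiv1\pmod4$, is not known; it would be a substantial result about Paley-graph independence numbers, and nothing like it is available.

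The missing idea is the one you only hint at in your last sentence: the primes $q\equiv 3\pmod4$ can be used, not individually (where they give singletons), but \emph{in pairs}. For primes $q_1>q_2$ both $\equiv3\pmod4$, consider the quadratic-residue \emph{tournament} on $\Z_{q_j}$ (an edge $a\to b$ iff $a-b\in R_{q_j}\setminus\{0\}$; this is well-defined since $-1$ is a non-residue). Every tournament on $n$ vertices contains a transitive subtournament of size $\ge\lfloor\log_2 n\rfloor+1$, so one finds chains $a^{(j)}_1,\ldots,a^{(j)}_k$ in $\Z_{q_j}$ with $a^{(j)}_s-a^{(j)}_t\in R_{q_j}$ for $s<t$, of length $k\ge\log q_2/\log 2$. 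Then the ``anti-diagonal'' set $A=\{(a^{(1)}_s,a^{(2)}_{k+1-s}):1\le s\le k\}\subset\Z_{q_1q_2}$ has the property that any nonzero difference is a residue in one coordinate and a non-residue in the other, hence a non-residue modulo $q_1q_2$. This gives a set of size $\gg\log q_2$ in $\Z_{q_1q_2}$ satisfying \eqref{0.1}; your $\Z_{21}$ example is exactly the smallest instance.

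Now pair up the primes $q_1>q_2>\cdots$ of $m$ congruent to $3\pmod4$ as $(q_1,q_2),(q_3,q_4),\ldots$ and include the resulting sets $A_{q_{2j-1}q_{2j}}$ in the CRT product along with the $S_p$'s. This contributes an extra
\[
\sum_{j}\log\log q_{2j}\;=\;\tfrac12\sum_{\substack{q\mid m\\ q\equiv3\pmod4}}\log\log q\;+\;O(\log\log m)\;=\;\tfrac18(\log\log m)^2(1+o(1))
\]
for almost all $m$, by the same concentration argument you already outlined (applied now with $j=3$). Adding this $\tfrac18$ to your $\tfrac14$ yields the claimed $\tfrac38$.
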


\bigskip

In Section \ref{Sec2} we prove Theorem \ref{th1.1}; we closely follow the proof of the main result of \cite{G} with some modifications. In Section \ref{Sec3} we use Theorem \ref{th1.1} and some ``truncation'' argument to reduce Theorem \ref{th1.2} to Lemma \ref{lem3.2}, which concernes with the properties of large prime divisors of the form $4k+3$ of typical integers. In Section \ref{Sec4} we prove Lemma \ref{lem3.2} and thus finish the proof of Theorem \ref{th1.2}. The proof of Lemma \ref{lem3.2} relies on the fact that if $T_1,...,T_r$ are disjoint subset of primes in the interval $[y,z]\subset[2,x]$, where $y\to\infty$ and $\log x/\log z\to\infty$, then $\o(n,T_j)$ behave like independent Poisson random variables with parameters $H(T_j)=\sum_{p\in T_j}p^{-1}$. Section \ref{Sec5} is devoted to Theorems \ref{th1.4} and \ref{th1.5}. The proof of Theorem \ref{th1.4} relies on the observation that we have the bound $|A|\leq m^{\e}$ for any $A\subset\Z_m$ with (\ref{0.1}) whenever $m$ has a prime divisor $q\equiv3\pmod{4}$ such that $q\geq m^{1-\e}$. Finally, Theorem \ref{th1.5} is obtained by a ``product'' argument from the corresponding lower bounds for the cases of a prime $m=p\equiv1\pmod{4}$ and $m=q_1q_2$, where $q_1$ and $q_2$ are primes $3\pmod{4}$.

\section{Proof of Theorem \ref{th1.1}}
\label{Sec2}

In what follows, we use the words ``residue''and ``non-residue'' for ``quadratic residue'' and ``quadratic non-residue'' respectively.

Firstly, we show that it is enough to prove the theorem for odd $m$. Suppose that $m$ is even and write $m=2m_1$; then $\Z_m=\Z_2\oplus\Z_{m_1}$. Set
$$A_1=A\pmod{m_1}=\{ x\in\Z_{m_1} : \mbox{ there exists } a\in\Z_2 \mbox{ with } (a,x)\in A \}.
$$
Note that for any $x\in\Z_{m_1}$ at most one of the elements $(0,x)$ and $(1,x)$ belongs to $A$ (since their difference $(1,0)$ is a residue modulo $m$); denote this element, if it exists, by $(a_x,x)$. Hence, $|A|=|A_1|$.  Further, for any distinct $x,y\in A_1$ the difference $x-y$ is a non-residue modulo $m_1$ (since otherwise the difference $(a_x-a_y, x-y)$ would be a non-zero residue modulo $m$), and so without loss of generality we may assume that $m$ is odd.

Now we prove the theorem for odd $m$. We induct on $n=\o(m)$. Let $n=1$, that is, $m=p$ is a prime. If $p\equiv3\pmod{4}$, then $|A|\leq1$, since $-1$ is a non-residue modulo $p$ and, hence, for any $a\neq b$ one of the differences $a-b$ or $b-a$ is a residue modulo $p$. If $p\equiv1\pmod{4}$, we have the bound $|A|\leq p^{1/2}$. We give an elegant and folklore proof for that. Let us assume that $|A|>p^{1/2}$ and fix a non-residue $\xi\in\Z_p$. Consider the map $\varphi\colon A^2\to\Z_p$ defined by $\varphi(a,b)=a+\xi b$. By the pigeonhole principle, there are two distinct pairs $(a_1,b_1)$ and $(a_2,b_2)$ with $\varphi(a_1,b_1)=\varphi(a_2,b_2)$, that is, $\xi=(a_1-a_2)(b_2-b_1)^{-1}$. It follows that one of the differences $a_1-a_2$ and $b_1-b_2$ is a nonresidue modulo $p$, and we are done.

Now assume that $n\geq2$ and the claim is true for all square-free $m'$ with $\o(m')<n$. 
Let $p_1<p_2<\ldots<p_n$ be the prime divisors of $m$. Denote by $\chi_j$ the Legendre symbol modulo $p_j$. Since each difference $a-b$ of distinct elements of $A$ is a non-residue modulo $p_j$ for at least one $p_j$, we have
\begin{equation}\label{2.1} 
|A|=\sum_{a,b\in A}\prod_{j=1}^n (1+\chi_j(a-b))
= |A|^2 + \sum_{D}\sum_{a,b\in A}\chi_D(a-b),
\end{equation}
where $D$ runs over all non-empty subsets of $[n]=\{1,\ldots,n\}$ and $\chi_D(x)=\prod_{j\in D}\chi_j(x)$. Set $p_D=\prod_{j\in D}p_j$. The key observation which makes possible our improvement of the main result of \cite{G} is that we may restrict the outer summation over those (non-empty) $D$ for which $\o_3(p_D)$ is even (since otherwise $\chi_D(-1)=-1$ and $\chi_D(a-b)+\chi_D(b-a)=0$ for any $a,b\in\Z_{p_D}$). In what follows, we denote the summation over such $D$ by $\sumprime_D$.

Denote $\sigma=1-|A|^{-1}$. Then we may rewrite (\ref{2.1}) as follows:
\begin{equation*}
|A|^2\sigma=-\sumprime_{D}\sum_{a,b\in A} \chi_D(a-b).
\end{equation*}
Using Cauchy-Schwarz, we see that
\begin{equation*} |A|^2\sigma\leq \sumprime_{D}|A|^{1/2}S_{D}^{1/2}, 
\end{equation*}
where
$$ S_{D}=\sum_{a\in A} \left|\sum_{b \in A} \chi_D(a-b)\right|^{2}.
$$
Thus
\begin{equation} \label{A}
|A|^{3/2}\sigma\leq \sumprime_{D}S_{D}^{1/2}.
\end{equation}
Now we need to estimate the sums $S_D$. For $D\subseteq[n]$ we set
$$
H_D=\max \{ |A|: A\subset \Z_{mp_D^{-1}} , \, A \mbox{ obeys } (\ref{0.1})  \} .
$$

The following bound is crucial for the induction step.

\begin{lem}\label{lem2.2} 
For any non-empty $D\subseteq[n]$ we have
$$ S_D \leq |A|^2H_D+|A|H_D\sum_{D'\subseteq D} H_{D'}p_{D'}
$$	
(here and in what follows the summation is over non-empty $D'$).
\end{lem}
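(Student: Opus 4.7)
The plan is to exploit that $\chi_D$ is a character modulo $p_D$, reducing $S_D$ via CRT to a weighted character sum on $\Z_{p_D}$ whose weights are the fiber sizes of the projection $A\to\Z_{p_D}$. Writing $\Z_m\cong\Z_{m/p_D}\times\Z_{p_D}$, the inner sum $\sum_{b\in A}\chi_D(a-b)$ depends only on $a$ modulo $p_D$. Setting
\[
\mu(u)=\#\{a\in A:a\equiv u\!\!\pmod{p_D}\},\qquad g(u)=\sum_{v\in\Z_{p_D}}\mu(v)\chi_D(u-v),
\]
we obtain the clean expression $S_D=\sum_{u\in\Z_{p_D}}\mu(u)g(u)^2$, so the lemma will follow from a pointwise bound on $\mu$ together with an $L^2$ bound on $g$.

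For the pointwise bound, I will establish $\mu(u)\leq H_D$, and more generally the auxiliary claim that $N_{D'}(t):=\#\{a\in A:a\equiv t\!\!\pmod{p_{D'}}\}\leq H_{D'}$ for every non-empty $D'\subseteq D$ and every $t\in\Z_{p_{D'}}$. Indeed, if $a,b\in A$ are distinct with $a\equiv b\pmod{p_{D'}}$, then $p_j\mid a-b$ for all $j\in D'$, so the fact that $a-b\notin R_m$ (granted by \eqref{0.1}) forces some prime $p_j$ with $j\notin D'$ to witness the non-residue. Hence reduction modulo $m/p_{D'}$ is injective on this fiber, and its image is a subset of $\Z_{m/p_{D'}}$ satisfying \eqref{0.1}, giving the stated bound.

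For the $L^2$ bound I expand
\[
\sum_u g(u)^2=\sum_{v,w}\mu(v)\mu(w)\sum_{u\in\Z_{p_D}}\chi_D((u-v)(u-w))
\]
and evaluate the inner character sum via CRT together with the Jacobsthal-type identity $\sum_{x\in\Z_{p_j}}\chi_j((x-a)(x-b))=p_j-1$ or $-1$ according as $a\equiv b$ or $a\not\equiv b\pmod{p_j}$. This produces the factorization $\sum_u\chi_D((u-v)(u-w))=\prod_{j\in D}\bigl(p_j\one[v_j=w_j]-1\bigr)$, and expanding the product gives
\[
\sum_u g(u)^2=\sum_{D'\subseteq D}(-1)^{|D|-|D'|}p_{D'}\sum_{t\in\Z_{p_{D'}}}N_{D'}(t)^2.
\]
Isolating the $D'=\emptyset$ term as $(-1)^{|D|}|A|^2$, bounding $\sum_tN_{D'}(t)^2\leq H_{D'}|A|$ for each non-empty $D'$ by the pointwise claim above, and taking absolute values (legitimate because $\sum_u g(u)^2\geq 0$), I arrive at
\[
\sum_u g(u)^2\leq |A|^2+|A|\sum_{\emptyset\neq D'\subseteq D}H_{D'}\,p_{D'}.
\]
The lemma then follows from $S_D\leq\bigl(\max_u\mu(u)\bigr)\sum_u g(u)^2\leq H_D\sum_u g(u)^2$.

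The main bookkeeping subtlety is separating the $D'=\emptyset$ contribution (which yields the leading $|A|^2H_D$ term) from the remaining signed sum over non-empty $D'$, after which everything reduces to a routine CRT factorization of the Jacobsthal character sum; I do not foresee a genuine obstacle.
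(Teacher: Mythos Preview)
Your proof is correct and follows essentially the same approach as the paper: bound the fiber sizes $\mu(u)=|A_u|$ by $H_D$, factor the character sum over $\Z_{p_D}$ via CRT, apply the Jacobsthal identity for each $p_j$, and then bound the resulting signed sum term by term. The only cosmetic difference is that the paper partitions pairs $(b_1,b_2)$ according to the \emph{exact} set $D'$ of primes at which they coincide (yielding factors $\phi(p_{D'})$ and the sets $B_{D'}$), whereas you expand $\prod_{j\in D}(p_j\one[v_j=w_j]-1)$ algebraically (yielding factors $p_{D'}$ and the sums $\sum_t N_{D'}(t)^2$ counting pairs coinciding on \emph{at least} $D'$); both routes give the same final bound.
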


\begin{proof}  For each residue $x$ modulo $p_D$ we set 
$$A_x=\{a\in A : a\equiv x \pmod{p_D}\}.
$$
One can think of elements of $A_x$ modulo $mp_D^{-1}$, and the difference of distinct elements of $A_x$ is a non-residue modulo $mp_D^{-1}$. Then by the definition of $H_D$ we have $|A_x|\leq H_D$;
further, obviously, $A=\bigsqcup\limits_{x\in\mathbb{Z}_{p_D}} A_x$ and elements of $A_x$ give the same contribution to $S_D$. We thus see that
\begin{align*}
S_D &= \sum_{x\in \Z_{p_D}} \sum_{a\in A_x}\left|\sum_{b \in A} \chi_D(x-b)\right|^{2} =\sum_{x\in \Z_{p_D}} |A_x|\left|\sum_{b \in A} \chi_D(x-b)\right|^{2}  \\
&\le H_D\sum_{b_1,b_2\in A}\sum_{x\in\Z_{p_D}}\prod_{j\in D} \chi_j(x-b_1)\chi_j(x-b_2)  \\
&=H_D\sum_{b_1,b_2\in A}\prod_{j\in D}\sum_{x_j\in\Z_{p_j}} \chi_j(x_j-b_1)\chi_j(x_j-b_2). 
\end{align*}
Let us compute the inner sum. 
For the sake of brevity we introduce the following definition: a pair $(b_1,b_2)$ is said to be \textit{special} modulo $p$ if $b_1\equiv b_2 \pmod{p}$. We have 
\[
\sum_{x\in \Z_{p_j}}\chi_j(x-b_1)\chi_j(x-b_2)=p_j-1
\]
 if $(b_1,b_2)$ is special modulo $p_j$, and 
$$\sum_{x\in \Z_{p_j}}\chi_j(x-b_1)\chi_j(x-b_2)=\sum_{x\neq b_2} \chi_j\left(1+\frac{b_2-b_1}{x-b_2}\right)=\ssum{x\in \Z_{p_j} \\ x\neq1} \chi_j(x)=-1
$$ 
otherwise.

Denote by $B_{D'}$ the set of pairs $(b_1,b_2)\in A^2$ which are special modulo each prime $p_j$ with $j\in D'$, and not special
modulo every prime $p_j$ with $j\in D \setminus D'$.
In particular, $(b_1,b_2)\in B_{D'}$ implies that
$b_1\equiv b_2 \pmod{p_{D'}}$ and thus
$|B_{D'}| \le |A| H_{D'}$.
 We thus have
 \begin{align*}
 S_D& \leq H_D\left( (-1)^{|D|}|B_{\varnothing}| + \sum_{D'\subseteq D} (-1)^{|D|-|D'|} \phi(p_{D'})|B_{D'}| \right)\\
&\le H_D |A|^2 + |A| H_D \sum_{D'\subseteq D} p_{D'} H_{D'},
 \end{align*}
 where $\phi$ is Euler's function. 
\end{proof}

\bigskip

This lemma implies
$$S_D^{1/2} \leq |A|H_D^{1/2} + |A|^{1/2}H_D^{1/2}\sum_{D'\subseteq D}H_{D'}^{1/2}p_{D'}^{1/2} . 
$$
Substituting this estimate into (\ref{A}), we obtain
\begin{equation}
|A|\sigma \leq |A|^{1/2}T_1 + T_2, \label{6} 
\end{equation}
where
\begin{equation*} T_1=\sumprime_{D\subseteq[n]}H_D^{1/2}, \label{T_1}
\end{equation*} 
\begin{equation*}T_2=\sumprime_{D\subseteq[n]} H_D^{1/2}\sum_{D'\subseteq D}H_{D'}^{1/2}p_{D'}^{1/2}. \label{T_2}
\end{equation*}


Our further argument is roughly the following. By the induction hypothesis we have $H_D\ll_n (mp_D^{-1})^{1/2}$ (as is usual, the notation $B\ll_n C$ with positive $B,C$ means that $B\leq f(n)C$ for an appropriate function $f$); then
$$T_1 \ll_n m^{1/4}\sum_Dp_D^{-1/4} \ll m^{1/4}\sum_D1 \ll_n m^{1/4}
$$
and, similarly,
$$T_2 \ll_n m^{1/2}\sum_D p_D^{-1/4}\sum_{D'\subseteq D}p_{D'}^{1/4} \ll m^{1/2} \sum_D\sum_{D'\subseteq D}1 \ll_n m^{1/2}.
$$
Hence, (\ref{6}) gives us
$$|A|\ll_n |A|^{1/2}m^{1/4} + m^{1/2}, 
$$
and we get a contradiction if $|A|\gg_nm^{1/2}$. So we easily have a bound $|A|\ll_nm^{1/2}$ with some explicit dependence of the constant on $n$, and this is enough to prove the theorem in the case where $\o_3(m)$ is even. If $\o_3(m)$ is odd, we have a better bound $H_D\ll_n (mp_D^{-1}q^{-1})^{1/2}$ if $\o_3(p_D)$ is even, and the similar argument gives $T_1\ll_n (mq^{-1})^{1/4}$; the only problem for getting immediately the bound $T_2\ll_n (mq^{-1})^{1/2}$ (which would imply the theorem in the same way up to explicit dependence on $n$) is that we have only ``trivial'' bound $H_{D'}\ll_n (mp_{D'}^{-1})^{1/2}$ if $\o_3(p_{D'})$ is odd. However, it turns out to be 
just a technical difficulty and we are still able to proceed as above. Note that these crude bounds for $T_1$ and $T_2$ imply the theorem in the form $|A|\leq m^{1/2}q^{-1/2}f(n)$ with $f(n)=\exp(O(n^2))$, whereas we are aiming for a better dependence on $n$.

We turn to the details. For $D\subseteq[n]$, $D\neq\varnothing$, let $q_D$ be the least prime divisor of $mp_D^{-1}$ of the form $4k+3$ if $\o_3(mp_D^{-1})$ is odd, and $q_D=1$ otherwise. Since $D$ is non-empty, we have $\o(mp_D^{-1})<n$ and we can apply the induction hypothesis, which gives us 
\begin{equation}\label{H_Dtr}
H_D\leq (mp_D^{-1})^{1/2}q_D^{-1/2}(10n)^{2(n-|D|)}. 
\end{equation}
Recall that the summation in the sums $T_1$ and $T_2$ is taken over $D$ with even $\o_3(p_D)$ (let us call these $D$ \textit{proper}). Set $q=q_{\varnothing}$; if $\o_3(m)$ is even, then $q_D=q=1$ for any proper $D$. If $\o_3(m)$ is odd, then $q_D\geq q$ for any proper $D$. Hence, in both cases we have
\begin{equation}
H_D\leq (mp_D^{-1})^{1/2}q^{-1/2}(10n)^{2(n-|D|)}. \label{H_D}
\end{equation}
for any proper $D$.


Now we estimate the sums $T_1$ and $T_2$. We begin with a bound for $T_1$.
Using (\ref{H_D}) and extending the summation to all $D$, we have
\begin{multline}
(mq^{-1})^{-1/4}\,T_1\leq \sumprime_{D}p_D^{-1/4}(10n)^{n-|D|} \leq  (10n)^n\sum_{d=1}^n\sum_{|D|=d}p_D^{-1/4}(10n)^{-d}  \leq \\ (10n)^n\sum_{d=1}^n(10n)^{-d}\binom{n}{d}\leq (10n)^n\sum_{d=1}^n(10n)^{-d}\frac{n^d} {d!}\leq\\
(10n)^{n}\sum_{d=1}^{\infty}\frac{10^{-d}}{d!}\leq  0.12(10n)^{n}. \label{T1} 
\end{multline}
Finally, we estimate $T_2$. We have
\begin{equation}\label{prnpr} 
T_2=T_2'+T_2'',
\end{equation}
where
\begin{equation*}
T_2'=\sumprime_{D} H_D^{1/2}\sumprime_{D'\subseteq D}H_{D'}^{1/2}p_{D'}^{1/2}
\end{equation*}
(the inner summation is over proper $D'$), and
\begin{equation*}
T_2''=\sumprime_{D} H_D^{1/2}\sssum_{D'\subseteq D}H_{D'}^{1/2}p_{D'}^{1/2}
\end{equation*}
(the inner summation is over non-proper $D'$). We first work with $T_2'$. Using the bound (\ref{H_D}), we find 
$$H_{D'}p_{D'}\leq (mp_{D'}^{-1})^{1/2}q^{-1/2}(10n)^{2(n-|D'|)}p_{D'}=m^{1/2}p_{D'}^{1/2}q^{-1/2}(10n)^{2(n-|D'|)},
$$
and hence
\begin{equation}\label{T_2'}
T_2' \leq (mq^{-1})^{1/2}\sumprime_{D}(10n)^{n-|D|}p_D^{-1/4}\sumprime_{D'\subseteq D}(10n)^{n-|D'|}p_{D'}^{1/4}.
\end{equation}
Now we estimate $T_2''$. Fix some proper $D$ and non-proper $D'\subseteq D$. Since $\o_3(p_D)$ is even and $\o_3(p_{D'})$ is odd, $D'$ can be represented in the form $D'=D_1\setminus\{p_j\}$ with proper $D_1\subseteq D$ such that $\o_3({p_{D_1}})\geq2$ and some $p_j\equiv3\pmod{4}$; note that $p_j\geq q$. Thus, using the bounds (\ref{H_Dtr}) and $q_{D'}\geq 1$, we have
$$H_{D'}p_{D'}\leq (mp_{D_1}^{-1}p_j)^{1/2}(10n)^{2(n-|D_1|+1)}p_{D_1}p_j^{-1}\leq m^{1/2}p_{D_1}^{1/2}q^{-1/2}(10n)^{2(n-|D_1|+1)}.
$$
Then we may rewrite the inner summation in $T_2''$ over non-proper $D'\subseteq D$ as the summation over proper $D_1$ with $\o_3(p_{D_1})\geq2$; any bound of the above type occurs at most $|D_1|$ times, and hence by \eqref{H_D} we have 
\be\label{T_2''}
T_2'' \leq (mq^{-1})^{1/2}\sumprime_{D}(10n)^{n-|D|}p_D^{-1/4}\sumprime_{\substack{D_1\subseteq D \\ \o_3(p_{D_1})\geq2} }|D_1|(10n)^{n-|D_1|+1}p_{D_1}^{1/4}.
\ee
Combining (\ref{T_2'}) and (\ref{T_2''}) with (\ref{prnpr}), we obtain
\begin{multline*}
(mq^{-1})^{-1/2}(10n)^{-2n}T_2 \leq \\ \sumprime_{D}(10n)^{-|D|}\sumprime_{D_1\subseteq D}(|D_1|+1)(10n)^{-(|D_1|-1)}(p_{D_1}/p_D)^{1/4}\leq \\
\sum_{D}(10n)^{-|D|}\sum_{D_1\subseteq D}(|D_1|+1)\leq 
\sum_{D}(10n)^{-|D|}(|D|+1)2^{|D|}\leq \\
\sum_{d=1}^n(d+1)2^d(10n)^{-d}\frac{n^d}{d!}\leq \sum_{d=1}^{\infty}(d+1)\frac{5^{-d}}{d!}\leq 0.47.
\end{multline*}
In light of this and (\ref{T1}), we see from (\ref{6}) that
\begin{equation*}
L:=|A|^{1/2}\left(|A|^{1/2}\sigma - 0.12(mq^{-1})^{1/4}(10n)^{n}\right)\leq 0.47(mq^{-1})^{1/2}(10n)^{2n}=:R. \label{<}
\end{equation*} 
Assume that 
$$|A|> (mq^{-1})^{1/2}(10n)^{2n}.
$$
But $n\geq2$; hence, $|A|>100$ and $\sigma=1-|A|^{-1} > 0.99$. Therefore
\begin{equation*}
L> (0.99-0.12)(mq^{-1})^{1/2}(10n)^{2n}>R, 
\end{equation*} 
a contradiction. This completes the proof.


\section{Proof of Theorem \ref{th1.2}} 
\label{Sec3} 

We begin with the following simple lemma.

\begin{lem}\label{lem3.1}
	Let $m=m_1m_2$, where $(m_1,m_2)=1$, and assume that we have a bound $|A_2|\leq g(m_2)$ for all $A_2\subset\Z_{m_2}$ with (\ref{0.1}). Then for any $A\subset\Z_m$ with (\ref{0.1}) we have
	$$|A|\leq m_1g(m_2).
	$$
\end{lem}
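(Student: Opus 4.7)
The plan is to slice $A$ into fibers over $\Z_{m_1}$ and show each fiber projects injectively into $\Z_{m_2}$ onto a set that satisfies (\ref{0.1}) modulo $m_2$.

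First I would invoke the Chinese Remainder Theorem to identify $\Z_m$ with $\Z_{m_1}\oplus \Z_{m_2}$, and record the basic fact that $x\in R_m$ if and only if the reduction of $x$ modulo $m_i$ lies in $R_{m_i}$ for each $i=1,2$ (since $x\equiv y^2\pmod m$ corresponds via CRT to $x\equiv y_i^2\pmod{m_i}$ for $i=1,2$). For $c\in\Z_{m_1}$ define the fibre
\[
A_c=\{a\in A:a\equiv c\pmod{m_1}\},
\]
and let $A_{2,c}\subset\Z_{m_2}$ denote the image of $A_c$ under reduction mod $m_2$. The map $A_c\to A_{2,c}$ is clearly a bijection, since two elements of $A_c$ agreeing mod $m_2$ would coincide mod $m$ by CRT.

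Next I would check that $A_{2,c}$ obeys (\ref{0.1}) in $\Z_{m_2}$. Take distinct $a,b\in A_c$. Then $a-b\ne 0$ in $\Z_m$, so by (\ref{0.1}) we have $a-b\notin R_m$. But $a-b\equiv0\pmod{m_1}$ is trivially a square modulo $m_1$, so by the CRT characterization of $R_m$ the reduction of $a-b$ modulo $m_2$ is not in $R_{m_2}$; it is also nonzero (else $a\equiv b\pmod m$). Thus the nonzero differences from $A_{2,c}$ miss $R_{m_2}$, so $A_{2,c}$ satisfies (\ref{0.1}) and the hypothesis gives $|A_c|=|A_{2,c}|\le g(m_2)$.

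Finally I would sum over the $m_1$ residue classes:
\[
|A|=\sum_{c\in\Z_{m_1}}|A_c|\le m_1\, g(m_2).
\]
There is no real obstacle here; the only thing one must be slightly careful about is the square/non-square bookkeeping under CRT (in particular that $0$ counts as a residue in the factor $\Z_{m_1}$, which is what makes the nonzero difference $a-b$ land in a non-residue class of $\Z_{m_2}$).
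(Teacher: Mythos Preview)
Your proof is correct and follows exactly the same approach as the paper: slice $A$ into fibers $A_c$ over the residue classes $c\in\Z_{m_1}$, observe that each fiber reduces injectively to a subset of $\Z_{m_2}$ satisfying (\ref{0.1}), apply the hypothesis, and sum. You have simply spelled out the CRT bookkeeping (in particular that $0\in R_{m_1}$ forces the difference to be a non-residue mod $m_2$) which the paper leaves implicit.
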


\begin{proof}
For any $a_1\in \Z_{m_1}$ set
$$A(a_1)=\{a\in A: a\equiv a_1\pmod{m_1}\}.
$$
Then $A(a_1)\pmod{m_2}$ obeys (\ref{0.1}) and, hence, we have $|A(a_1)|\leq g(m_2)$.  Summing over $a_1$ completes the proof.
\end{proof}

\bigskip
 
The idea of the proof of Theorem \ref{th1.2} is the following. First, we restrict our attention to those $\sqrt{x} < m\le x$ for which 
\begin{itemize}
\item[(i)]  its powerful\footnote{A number $n$ is \emph{powerful} if every prime in its prime factorization occurs with exponent at least 2.} part $P(m)=\prod_{p^{\a}||m, \, \a\geq2}p^{\a}$ is at most $\log x$;
\item[(ii)] $\o(m)\leq 2\log\log x$.
\end{itemize}

Almost all integers $m$ satisfy (i) and (ii).  Indeed,
the number of powerful integers $\le y$ is $O(\sqrt{y})$
and hence by partial summation the number of $m\le x$ failing (i) is at most
\[
\ssum{d>\log x \\ d\text{ is powerful}}
\frac{x}{d} \ll \frac{x}{\sqrt{\log x}}.
\]
The number of $m\le x$ failing (ii) is
$O(x/(\log x)^{\log 4-1})$ by the Hardy-Ramanujan \cite{HR} (see also Theorem 1.5 of \cite{N} or Theorem 3 of \cite{F})
estimate
\[
\# \{m\le x : \omega(m)=k\} \ll \frac{x}{\log x} \cdot \frac{(\log\log x+O(1))^{k-1}}{(k-1)!},
\]
after summing over $k>2\log\log x$ (the first summand dominates) and using Stirling's formula.  Thus,
\be\label{fail-i.ii}
\# \{m\le x : m\text{ fails (i) or fails (ii)}\} \ll
\frac{x}{(\log x)^{1/3}}.
\ee
For $m$ obeying (i) and (ii), let $q_1>q_2>...$ be prime divisors of $m/P(m)$ of the form $4k+3$. Take some $q_{2j-1}$ ($j\geq1$) and set $d_{2j-1}=\prod_{i>2j-1}q_i$. Denote $m_1=P(m)d_{2j-1}$ and $m_2=m/m_1$; then $m_2$ is square-free, $\o_3(m_2)$ is odd, and the least prime divisor of $m_2$ of the form $4k+3$ is $q_{2j-1}$. Now suppose that $A\subset\Z_m$ obeys (\ref{0.1}). Applying Lemma \ref{lem3.1} and Theorem \ref{th1.1}, we get by (i) and (ii)
\begin{equation}\label{3.1}
\begin{split}
|A|&\leq P(m)d_{2j-1} (m_2q_{2j-1}^{-1})^{1/2}(10\o(m_2))^{2\o(m_2)}\\
&=m^{1/2}\left(\frac{d_{2j-1}}{q_{2j-1}}\right)^{1/2}P(m)^{1/2}(10\o(m_2))^{2\o(m_2)}\\
&=m^{1/2}\left(\frac{d_{2j-1}}{q_{2j-1}}\right)^{1/2}\exp(O(\log_2 x\log_3 x)),
\end{split}
\end{equation}
where we use iterated logarithm notation $\log_2x=\log\log x$, $\log_3x=\log\log\log x$, etc. We see that our bound is good if $q_{2j-1}$ is significantly larger than $d_{2j-1}$.  If we can find a prime $q_{2j-1}$ such that

\smallskip

(iii) there exists $q_{2j-1}>x^{\e}$ such that $q_{2j-1}>d_{2j-1}^2$,

\smallskip

\noindent
then we have from (\ref{3.1})
\begin{equation*}\label{3.2}
|A|\leq m^{1/2-\e/4}\exp(O(\log_2 x\log_3 x)) \le m^{1/2-\e/5}
\end{equation*}
for large enough $x$, since $m > \sqrt{x}$ and $\e\geq (\log x)^{-1/2}$.
Thus to prove Theorem \ref{th1.2} it suffices to
prove that (iii) holds for all but $O(c(\e)x)$ numbers $m\leq x$.
 Note that if $\eps>0$ is fixed then (iii)
 fails for a positive proportion of all $m$,
 e.g. those which are $m^\eps$-smooth.
 
For a positive integer $m$ and $y>0$, we set
$$D(m,y)=\prod_{ \substack{q<y, \,\,q^{\a}||m \\ q\equiv3\pmod{4}}} q^{\a}.
$$
Theorem \ref{th1.2} will evidently follow from the next lemma.

\begin{lem}[the condition (iii)]\label{lem3.2}
Let $\e\in[(\log x)^{-1/2},1]$ and $c(\e)=\exp(-(\log\e^{-1})^{1/10})$. Then all but $O(c(\e)x)$ integers $m\leq x$ have a prime divisor $q_{2j-1}>x^{\e}$ with $q_{2j-1}>D(m,q_{2j-1})^2$.
\end{lem}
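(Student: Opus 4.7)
The plan is to use the Poisson approximation for prime divisors of a random $m \le x$ stated in the excerpt. After restricting to $m \in (\sqrt{x}, x]$ satisfying (i) and (ii), which excludes $O(x/(\log x)^{1/3}) = O(c(\e)x)$ integers for $\e$ in the stated range, I would partition the logarithmic range $(x^\e, x^{1/\log\log x}]$ into consecutive blocks $I_k = (y_{k+1}, y_k]$ chosen so that the $3 \pmod{4}$ Poisson parameters $H_k = \sum_{p \in I_k,\, p \equiv 3 \pmod{4}} 1/p$ are bounded positive constants; this yields $\asymp \log \e^{-1}$ blocks. The cutoff $x^{1/\log\log x}$ ensures $\log x / \log y_0 \asymp \log\log x \to \infty$, so the Poisson approximation applies uniformly on this range.

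Next, select $K \asymp (\log \e^{-1})^{1/10}$ widely-separated \emph{pairs} of consecutive blocks $(I_{k_i}, I_{k_i+1})$, $i = 1,\dots, K$, and let $E_i$ be the event that $I_{k_i}$ contains a $3 \pmod{4}$ prime divisor $q$ of $m$ with $q \| m$ while $I_{k_i+1}$ contains no such prime divisor of $m$. Under $E_i$, letting $q = q_i$ denote the largest $3 \pmod{4}$ prime divisor of $m$ in $I_{k_i}$, every smaller $3 \pmod{4}$ prime divisor of $m$ must lie below $y_{k_i+2}$; then by condition (i) together with a Mertens-type bound $\sum_{p\mid m,\, p\equiv 3 \pmod{4},\, p\le y_{k_i+2}} \log p \ll \log y_{k_i+2}$ (valid after removing a further negligible exceptional set by moment estimates), one obtains $\log D(m, q_i) \ll \log y_{k_i+2} \ll \log q_i$, so $q_i > D(m, q_i)^2$. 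The Poisson approximation gives $P(E_i) \ge p_0 > 0$ for an absolute constant $p_0$ (product of the probability of one prime in $I_{k_i}$, of zero primes in $I_{k_i+1}$, and of squarefree-ness at $q_i$), and widely-separated $E_i$ are approximately independent. Hence the proportion of $m$ for which no $E_i$ holds is at most $(1 - p_0)^K$ plus the Poisson approximation error, and this is $\le c(\e)$ for the chosen $K$. To ensure the $q_i$ thus produced has odd global rank $2j-1$ (so it is in fact some $q_{2j-1}$), I would apply the Poisson approximation to counts of $3 \pmod{4}$ prime divisors of $m$ in $(q_i, x]$: the parity is essentially uniform, costing only a factor $\approx 1/2$ in $p_0$ and preserving the bound.

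\textbf{Main obstacle.} The main difficulty is making the Poisson approximation quantitative enough so that the joint distribution of the $K$ events simultaneously has additive error $o(c(\e))$, and handling the odd-index constraint (a parity condition coupling different blocks through the count of larger $3 \pmod{4}$ prime divisors of $m$) without spoiling approximate independence. The exponent $1/10$ in $c(\e) = \exp(-(\log \e^{-1})^{1/10})$ reflects a balance between taking $K$ large (to strengthen the $(1 - p_0)^K$ decay) and the Poisson approximation error, which accumulates with the number of intervals.
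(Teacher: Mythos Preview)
Your overall strategy---partition $(x^{\e}, x^{o(1)}]$ into blocks, apply the Poisson approximation to the counts $\omega(m,T_j)$, and look for a block pattern that forces a suitable prime $q$---is essentially the paper's approach, and your ``Mertens-type'' control of $D(m,q)$ corresponds to the paper's condition (iv).  The genuine gap is your treatment of the odd-index constraint.  You propose to first locate a prime $q_i>D(m,q_i)^2$ and then argue, via the Poisson approximation applied to the number of $3\pmod 4$ prime divisors of $m$ in $(q_i,x]$, that the parity of this count is close to uniform.  But Lemma~\ref{dTV} only controls prime factors in $[2,y]$ with error $u^{-u}$, $u=\log x/\log y$; it says nothing about primes near $x$, and a typical $m$ has several $3\pmod 4$ prime factors in $(x^{1/\log\log x},x]$ whose contribution to the parity you cannot access this way.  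Even below $x^{1/\log\log x}$, the parity of the count in $(q_i,\,\cdot\,]$ is entangled with your events $E_1,\ldots,E_K$, since those events constrain precisely these counts; ``costing a factor $1/2$ in $p_0$'' is not justified.

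The paper avoids this entirely by searching not for the pattern $(1,0)$ in two consecutive blocks but for the pattern $(0,1,0,1)$ in four.  This yields \emph{two} primes $q'\in T_{j+2}$ and $q''\in T_j$ which, thanks to the intervening zeros, are \emph{consecutive} in the ordered list $q_1>q_2>\cdots$ of $3\pmod 4$ prime divisors of $m$; hence one of them automatically has odd index, with no reference whatsoever to the prime factors of $m$ above $y_0$.  Both satisfy $q>D(m,q)^2$ by the same smoothness argument.  A secondary remark: with blocks of bounded Poisson parameter, the ratio $\log y_k/\log y_{k+1}$ is a fixed constant, and the inequality $D(m,q_i)<q_i^{1/2}$ then forces a specific relation between this constant and the implied constant in your smoothness bound.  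The paper instead takes block ratio $\theta=C(\log\e^{-1})^{1/10}\to\infty$, which gives ample room and is exactly where the exponent $1/10$ in $c(\e)$ originates (the exceptional set for (iv) has size $\ll J\exp(-c_0\theta/2)$).
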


The same proof gives a similar statement with the exponent $2$
replaced by any fixed constant, but we do not need this here.

We note that in the work \cite{Bo} (see also \cite{HT}, Chapter 1, and the work \cite{E}) the following question (very close to (iii)) was studied. For $m\leq x$ and $y>0$ define 
$$d(m,y):=\max\{d|m: P^+(d) < y \},
$$
where $P^+(d)$ is the largest prime divisor of $d$. It was shown in \cite{Bo} that for  any $u>0$, almost all $m$ have about $\beta(u)\log\log m$ prime divisors $p|m$ with $p^u>d(m,p)$, where $\beta\colon [0,+\infty)\to[0,1]$ is a continious increasing function with $\beta(0)=0$ and $\lim_{u\to\infty}\beta(u)=1$. Nevertheless, we have several extra requirements in (iii) and thus we cannot use this result directly.


\section{Proof of Lemma \ref{lem3.2}}
\label{Sec4} 

For an integer $m$ and set $T$ of primes, let 
$\omega(m,T)$ be the number of distinct primes from $T$
which divide $m$.
The proof relies on the fact that if $T_1,...,T_r$ are disjoint subset of primes below $y=x^{o(1)}$ as $x\to \infty$, then $\o(m,T_j)$ behave like independent Poisson random variables with parameters $H_1(T_j)=\sum_{p\in T_j}\frac1p$.
Define the Total Variation Distance $d_{TV}(X,Y)$ between two random variables $X$ and $Y$ taking values in a discrete space $\O$ by
$$d_{TV}(X,Y):=\max_{A\subset \O} |\PR(X\in A)-\PR(Y\in A)|=
\frac12\sum_{\o\in\O}\left|\P(X=\o)-\P(Y=\o)\right|.
$$
We cite \cite[Theorem 1]{F}.  Here
\[
H_2(T) := \sum_{p\in T} \frac{1}{p^2}.
\]

\begin{lem}[Ford \cite{F}]\label{dTV}
Let $2\le y\le x$ and suppose that
 $T_1,\ldots,T_r$ are disjoint nonempty sets of primes in $[2,y]$.
Then
\[
d_{TV} \Big( (\omega(n,T_1),\ldots,\omega(n,T_r)), (Z(T_1),\ldots,Z(T_r)) \Big) \ll  \sum_{j=1}^r \frac{H_2(T_j)}{1+H_1(T_j)} + u^{-u}, \quad u=\frac{\log x}{\log y},
\]
where, for any set $T$ of primes, $Z(T)$ is a Poisson random variable with parameter $H_1(T)$, and
$Z(T_1),\ldots,Z(T_r)$ are independent.
\end{lem}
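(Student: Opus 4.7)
The plan is to compare the true distribution to the Poisson target through the Kubilius probabilistic model. Introduce, for each prime $p\le y$, an independent Bernoulli variable $\xi_p$ with $\P(\xi_p=1)=1/p$, and set $W_j=\sum_{p\in T_j}\xi_p$. Because $T_1,\ldots,T_r$ are pairwise disjoint, the vector $(W_1,\ldots,W_r)$ has mutually independent coordinates. By the triangle inequality,
\[
d_{TV}\bigl((\omega(n,T_j))_j,(Z(T_j))_j\bigr)\le d_{TV}\bigl((\omega(n,T_j))_j,(W_j)_j\bigr)+d_{TV}\bigl((W_j)_j,(Z(T_j))_j\bigr),
\]
so I can handle the two terms separately.

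For the first (arithmetic) term I would use a sharp Kubilius-type comparison. For any squarefree $d$ supported on primes in $[2,y]$, the count $\#\{n\le x:d\mid n\}$ equals $x/d+O(1)$, which matches $\P\bigl(\prod_{p\mid d}\xi_p=1\bigr)=1/d$ exactly, apart from an integer-part error. The joint event $\{\omega(n,T_j)=k_j,\ j=1,\ldots,r\}$ can be detected by inclusion-exclusion over such $d$, truncated at a level $z=x^{1-o(1)}$. The discrepancy between the truncated and full inclusion-exclusion is controlled by the count of $y$-smooth integers exceeding $z$, which by the de Bruijn estimate $\Psi(x,y)\ll x\,u^{-u}$ contributes $O(u^{-u})$ to the total variation. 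Summing the coordinate errors $O(1/x)$ from the main terms over all relevant $d\le z$ costs at most $O(z/x)=o(u^{-u})$, so the first piece is $\ll u^{-u}$.

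For the second (probabilistic) term I would first tensorise. Both $(W_j)_j$ and $(Z(T_j))_j$ are product measures, so the general identity
\[
d_{TV}(\mu_1\otimes\cdots\otimes\mu_r,\nu_1\otimes\cdots\otimes\nu_r)\le\sum_{j=1}^r d_{TV}(\mu_j,\nu_j)
\]
reduces matters to bounding, for each $j$, the distance between $W_j$ and a Poisson$(H_1(T_j))$ variable. This is the classical Barbour--Hall / Chen--Stein refinement of Le Cam's inequality for a sum of independent Bernoullis with small parameters:
\[
d_{TV}(W_j,Z(T_j))\le\frac{1-e^{-H_1(T_j)}}{H_1(T_j)}\sum_{p\in T_j}\frac{1}{p^{2}}\ll\frac{H_2(T_j)}{1+H_1(T_j)}.
\]
Adding the two pieces delivers the claimed bound.

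The main obstacle is the first (arithmetic) step, and specifically obtaining the \emph{sharp} error $u^{-u}$ rather than the weaker $\exp(-cu\log u)$ that falls out of the textbook Fundamental Lemma of sieve theory. This requires a careful bookkeeping of the truncation error across all coordinates $\mathbf k$ simultaneously, using either Selberg--Delange-type asymptotics for $\Psi(x,y)$ or the Hildebrand--Tenenbaum saddle-point analysis, in the precise form developed by Ford in \cite{F}. Once this refined smooth-number input is in place, the probabilistic half of the argument is purely mechanical.
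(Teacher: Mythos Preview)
The paper does not prove this lemma at all: it is quoted as Theorem~1 of Ford~\cite{F} (``We cite \cite[Theorem 1]{F}'') and used as a black box in Section~\ref{Sec4}. So there is no in-paper proof to compare your proposal against.

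For what it is worth, your sketch is the standard route and is essentially the strategy carried out in \cite{F}: pass through the Kubilius model $(W_j)_j$ by the triangle inequality, control the arithmetic step by a sharp smooth-number/Fundamental Lemma input yielding the $u^{-u}$ term, and then use the Barbour--Hall refinement of Le~Cam to get $\sum_j H_2(T_j)/(1+H_1(T_j))$ from the Bernoulli-to-Poisson comparison. You correctly identify the one nontrivial point, namely getting the precise $u^{-u}$ uniformly over all $\mathbf{k}$ rather than a cruder $\exp(-c\,u\log u)$; your line ``summing the $O(1/x)$ errors over all $d\le z$ costs $O(z/x)$'' is too quick as written (one must also control the number and size of the inclusion--exclusion terms), but this is exactly the bookkeeping that \cite{F} supplies.
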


Now we are ready to prove Lemma \ref{lem3.2}. We may suppose that $\e$ is small enough since otherwise the claim follows by taking the implied constant large enough. 
Set 
\[
\theta=C(\log \e^{-1})^{1/10},
\]
 where $C$ is a fixed, large constant.   We consider the primes of the form $4k+3$ in the interval $(x^{\e},x^{\sqrt{\e}}]$. Set $y_0=x^{\sqrt{\e}}$ and $y_j=y_0^{1/\theta^{j}}$ for $j=1,...,J$, where 
\begin{equation}\label{J} 
J=\max\{ j : \theta^{-j}\geq \e^{1/2}\}=\left\lfloor\frac{\log \e^{-1}}{2\log\theta}\right\rfloor \asymp \frac{\log( \e^{-1})}{\log \log (\eps^{-1})}.
\end{equation}
Further, define 
$$T_j=\{q\equiv3\pmod{4} \text{ prime } : q\in(y_j, y_{j-1}] \} 
\qquad (1\le j\le J).
$$
Then $T_1,...,T_J$, are disjoint subsets of primes in $(x^{\e},x^{\sqrt{\e}}]$ and by the Mertens theorem for arithmetic progressions (see \cite{W}), 
\be\label{H1Tj}
\lambda_j := H_1(T_j)=\frac12\log\frac{\log y_{j-1}}{\log y_j}+O(1)=\frac12\log\theta+O(1)\in(\frac13\log\theta,\log\theta),
\ee
 since $\e$ is small enough. 
For a randomly chosen $m\in [1,x]$, 
let $\omega_j=\omega(m,T_j)$ and $Z_j=Z(T_j)$ for each $j$,
and $\boldsymbol{\omega}=(\omega_1,\ldots,\omega_J)$
and $\bfZ=(Z_1,\ldots,Z_J)$.
 Applying Lemma \ref{dTV}, we obtain
\begin{equation*}
d_{TV}(\boldsymbol{\omega},\bfZ) \ll \exp(-\e^{-1/2})+(\log \theta)^{-1}\sum_{q>x^{\e}}q^{-2} \ll \exp(-\e^{-1/2}).
\end{equation*}
Here we used that $\eps \ge (\log x)^{-1/2}$.
In particular, for any event $E\subset\N_0^J$ we have 
\begin{equation}\label{muhaha} 
\left|\P(\boldsymbol{\omega}\in E)-\P(\bfZ\in E)\right| 
\ll \exp(-\e^{-1/2}).
\end{equation} 
Our main idea is to show that the event
\[
E = \{ (e_1,\ldots,e_J)\in \NN_0^J : \exists\, j\le J-3 \text{ with } e_{j+3}=0, e_{j+2}=1, e_{j+1}=0 \text{ and } e_j=1\}
\] 
is very likely.  This corresponds to 
 $\o_{j+3}=0$, $\o_{j+2}=1$, $\o_{j+1}=0$ and $\o_j=1$ for some $j$.
For such $m$, it is then very likely that $q'$, the unique prime
divisor of $m$ in $T_{j+2}$, satisfies
$q' > D(m,q')^2=D(m,y_{j+3})^2$ and that
$q''$,  the unique prime
divisor of $m$ in $T_{j}$, satisfies
$q'' > D(m,q'')^2=D(m,y_{j+1})^2$.
Furthermore, one of the primes $q',q''$ has an odd index,
that is, equals $q_{2h-1}$ for some $h$. 

For any $k\le (J-3)/4$, we have by \eqref{H1Tj}
\begin{align*}
\PR \big( Z_{4k+3}=0, Z_{4k+2}=1, Z_{4k+1}=0, Z_{4k}=1  \big) =
\lambda_{4k+2} \lambda_{4k} \er^{-\lambda_{4k+3}-\lambda_{4k+2}-\lambda_{4k+1}-\lambda_{4k}}
\ge \theta^{-4}.
\end{align*}
The 4-tuples $(Z_{4k+3}, Z_{4k+2}, Z_{4k+1}, Z_{4k})$
are independent for different $k$.
Therefore,
\[
\PR(\bfZ \not\in E) \le (1-\theta^{-4})^{(J/4)-2} \le \er^{-\theta^{-4}(J/4-2)}.
\]
By \eqref{muhaha}, it follows that
\be\label{omeganotinE}
\PR(\boldsymbol{\omega} \not\in E) \ll
\er^{-\theta^{-4}(J/4-2)} + \er^{-\e^{-1/2}}
\ll \er^{-(\log \eps^{-1})^{1/2}} \ll c(\eps)
\ee
using \eqref{J}.

We do not want $m$ to have a big smooth part.  Consider the
condition 

\medskip

(iv) For every $0\le j\le J$, $D(m,y_j) \le y_j^{\theta/2}$.

\medskip

\noindent
Using Theorem 07 of \cite{HT}, it follows  for
some absolute constant $c_0>0$ that the number of $m\le x$
failing (iv) is at most
\begin{align*}
\ll \sum_{j=0}^J x \er^{-c_0 \theta/2} \ll x c(\eps)
\end{align*}
if $C$ is taken large enough in the definition of $\theta$.
Therefore, by \eqref{omeganotinE}, the number of 
integers $m\le x$ which fail (iv) or have $\boldsymbol{\omega}
\not\in E$ is $O(c(\eps) x)$.
By \eqref{fail-i.ii}, the number of $m\le x$ failing (i) or failing (ii)
is likewise $O(c(\eps)x)$.

Consider now an integer $m\le x$ satisfying
(i), (ii), (iv) and $\boldsymbol{\omega}\in E$.
In particular, by (i) the primes $q|m$ with $q>x^{\eps}$
divide $m$ to the first power only.
By  $\boldsymbol{\omega}\in E$, there is at least one $j\le J-3$ such that
\[
\omega_{j+3} = 0, \omega_{j+2}=1, \omega_{j+1}=0, \omega_j=1.
\]
Let  $q'$ be the unique prime
divisor of $m$ in $T_{j+2}$
and $q''$ be  the unique prime
divisor of $m$ in $T_{j}$.
By (iv),
\[
D(m,q') = D(m,y_{j+3}) \le y_{j+3}^{\theta/2} = y_{j+2}^{1/2}
\le (q')^{1/2}.
\]
and likewise
\[
D(m,q'') = D(m,y_{j+1}) \le y_{j+1}^{\theta/2} = y_{j}^{1/2}
\le (q'')^{1/2}.
\]
Furthermore, one of the primes $q',q''$ has an odd index,
that is, equals $q_{2h-1}$ for some $h$. 
This completes the proof of Lemma \ref{lem3.2}.

\section{Proofs of Theorems \ref{th1.4} and \ref{th1.5}}
\label{Sec5}

\begin{proof}[Proof of Theorem \ref{th1.4}]
We may assume that $\e\in(0,1/2)$.  We claim that $M_\e$ contains every number $m$
that has a prime factor $q\ge m^{1-\eps}$ with $q\equiv 3\pmod{4}$.
 To see this, suppose that $A\subset\Z_m$ obeys $(A-A)\cap R_m =\{0\}$. Then by Lemma \ref{lem3.1} (with $m_1=mq^{-1}$, $m_2=q$ and $g(m_2)=1$) we see that $|A|\leq mq^{-1}\leq m^{\e}$.
As each number $m$ has at most one such prime factor $q$,
the number of such $m\le x$ is at least
\[
\sum_{x^{1-\e}\leq q\leq x}\#\{m\in(1,x]: q|m \}
=\frac{x}{2}\log\pfrac{1}{1-\e} + O\left(\frac{x}{\log x}\right),
\]
by the Mertens theorem for arithmetic progressions.
\end{proof}

To prove Theorem \ref{th1.5}, we first need the following two results. 

\begin{lem}[Cohen \cite{C}]\label{lem5.1}
For any prime $p\equiv1\pmod{4}$ there exists $A\subset\Z_p$ obeying (\ref{0.1}) with $|A|\geq \frac{1}{2\log2}\log p$.
\end{lem}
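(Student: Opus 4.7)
The plan is a greedy construction with the admissible-extension count controlled by Weil's bound on character sums with polynomial arguments. Let $\chi$ denote the Legendre symbol modulo $p$ and write $N\subset\Z_p$ for the non-zero non-residues. Since $p\equiv 1\pmod 4$ we have $\chi(-1)=1$, so membership in $N$ is symmetric under negation and condition (\ref{0.1}) need only be maintained one-sidedly. I maintain $A_k=\{a_1,\ldots,a_k\}$ with $a_i-a_j\in N\cup\{0\}$ for all $i,j$, and extend by choosing a new $a_{k+1}$ with $a_{k+1}-a_i\in N$ for every $i\le k$.

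Noting that for $y\neq 0$ the expression $(1-\chi(y))/2$ is the indicator of $y\in N$, the number of admissible extensions is
\[
N_k \;=\; \sum_{x\in\Z_p}\prod_{i=1}^{k}\frac{1-\chi(x-a_i)}{2}\;-\;\frac{k}{2},
\]
the correction accounting for the $k$ points $x=a_i$, where the factor corresponding to $i$ equals $(1-\chi(0))/2=1/2$ while the remaining factors equal $1$ by the induction hypothesis on $A_k$. Expanding the product, the $S=\varnothing$ term contributes $p/2^k$, while for each non-empty $S\subseteq[k]$ the inner sum $\sum_{x}\chi\bigl(\prod_{i\in S}(x-a_i)\bigr)$ is controlled by the Weil bound $(|S|-1)\sqrt p$, since $\prod_{i\in S}(x-a_i)$ is squarefree of positive degree (the $a_i$ being distinct).

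Summing these estimates via $\sum_{j\ge 1}(j-1)\binom{k}{j}=k2^{k-1}-2^k+1$ bounds the total error by $\frac{k}{2}\sqrt p$, so
\[
N_k \;\ge\; \frac{p}{2^k}\;-\;\frac{k}{2}\sqrt p\;-\;O(k).
\]
This is strictly positive whenever $k\cdot 2^k \le c\sqrt p$, i.e.\ for $k\le \frac{1}{2\log 2}\log p - O(\log\log p)$. The greedy step therefore succeeds throughout this range, yielding a set $A$ with $|A|\ge \frac{1}{2\log 2}\log p$ for $p$ sufficiently large; the lower-order $\log\log p$ slack can be absorbed by a slight sharpening of the error estimate, or by checking the finitely many small $p$ directly.

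The main obstacle is squeezing the constant to exactly $\frac{1}{2\log 2}$: a crude application of Weil leaves a $\log\log p$ gap that must be handled with care. The Weil bound itself is standard, but one must verify that $\prod_{i\in S}(x-a_i)$ is not a perfect square in $\F_p[x]$ (immediate from distinctness) and arrange the binomial book-keeping so the error stays below the main term $p/2^k$ all the way to $k\sim\tfrac12\log_2 p$. The only other subtlety is the boundary value $\chi(0)=0$, which produces the $k/2$ correction in $N_k$.
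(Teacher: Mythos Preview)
Your greedy-plus-Weil argument is a legitimate alternative route, but it does not quite reach the stated constant. The inequality $N_k>0$ forces $k\,2^{k}\ll\sqrt{p}$, i.e.\ $k\le \frac{1}{2\log 2}\log p - c\log\log p$ for some $c>0$; the $\log\log p$ deficit is intrinsic to the factor $k$ in front of $\sqrt{p}$ coming from $\sum_j (j-1)\binom{k}{j}$, and neither ``a slight sharpening of the error estimate'' nor ``checking finitely many small $p$'' removes it, since the shortfall persists for all $p$. So as written you obtain $|A|\ge \frac{1}{2\log 2}\log p - O(\log\log p)$, which for the application in Theorem~\ref{th1.5} is entirely sufficient (only $|A|\gg\log p$ is used there), but falls just short of the lemma as stated.

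The paper's own proof is quite different and more elementary: two-colour the edges of the complete graph on $\Z_p$ according to whether the difference is a residue, and invoke the Erd\H{o}s--Szekeres bound $R(n,n)\le\binom{2n-2}{n-1}\le 4^{n-1}$ to extract a monochromatic clique of size $n\ge\log p/\log 4$; if the clique sits in the residue colour, multiply through by a fixed non-residue $\xi$. This avoids character sums altogether and delivers the exact constant $\tfrac{1}{2\log 2}$ with no lower-order loss. Your approach, by contrast, is more analytic and would adapt to situations lacking the residue/non-residue symmetry that makes the Ramsey trick work, at the cost of the $\log\log p$ term.
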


\smallskip 

For sake of completeness, we provide a proof here. We follow the short argument from \cite{G}.

\begin{proof}
Consider the complete graph $G=(V,E)$ with $V=\Z_p$ and the partition $E=E_1\bigsqcup E_2$, where $E_1=\{(x,y) : x-y \,\, \mbox{is a residue}\}$ and $E_2=E\setminus E_1$. Then, by Ramsey's theorem for two colours (see, for instance, \cite{TV}, Theorem 6.9), one can find a complete monochromatic subgraph $G'=(V',E')$ of our graph $G$ with $|V'|=n$ whenever $|V|=p \geq \binom{2n-2}{n-1}$. We thus can find such a subgraph of size $n\geq \log p/\log4$. If $E\subseteq E_2$, then the set $V'$ of its vertices gives an example we need; if $E\subseteq E_1$, then for any non-residue $\xi\in\Z_p$ we get such an example in the form $\xi V'$. The claim follows.
\end{proof}

\begin{lem}\label{lem5.2}
	Let $q_1>q_2$ be primes $3\pmod{4}$. Then there exists $A\subset\Z_{q_1q_2}$ obeying (\ref{0.1}) with $|A|\geq \frac{1}{\log2}\log q_2$. 
\end{lem}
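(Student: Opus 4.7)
The plan is to combine, via the Chinese Remainder Theorem, a large transitive sub-tournament in the quadratic-residue tournament on $\Z_{q_1}$ with one on $\Z_{q_2}$, pairing them in opposite orders. Since $q_1\equiv q_2\equiv 3\pmod 4$, we have $\chi_i(-1)=-1$ for $i=1,2$, so the quadratic character $\chi_i\pmod{q_i}$ defines a tournament $T_i$ on the vertex set $\Z_{q_i}$ by directing $a\to b$ iff $a-b$ is a quadratic residue mod $q_i$. This is where the $3\pmod 4$ hypothesis enters in an essential way.

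Next I would invoke the Erd\H{o}s--Moser theorem: every tournament on $n$ vertices contains a transitive sub-tournament on $\lfloor\log_2 n\rfloor+1$ vertices (proved by a short induction: pick any vertex, it beats or is beaten by at least half of the others, and recurse on the larger neighborhood). Setting $m=\lfloor\log_2 q_2\rfloor+1\ge \log q_2/\log 2$ and using $q_1>q_2$, this applies to both $T_1$ and $T_2$. Hence I can choose sequences $a_1,\dots,a_m\in\Z_{q_1}$ with $\chi_1(a_i-a_j)=+1$ for all $i<j$, and $s_1,\dots,s_m\in\Z_{q_2}$ with $\chi_2(s_i-s_j)=+1$ for all $i<j$.

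The key trick is to pair the two orderings in opposite directions. Define $A\subset\Z_{q_1q_2}$ to consist of the $m$ elements $b_1,\dots,b_m$ determined by CRT via
\[
b_i\equiv a_i\pmod{q_1},\qquad b_i\equiv s_{m+1-i}\pmod{q_2}\qquad(1\le i\le m).
\]
For indices $i<j$, the difference $b_i-b_j$ is nonzero mod $q_1q_2$ (both CRT components are nonzero), reduces to the residue $a_i-a_j$ mod $q_1$, and reduces to $s_{m+1-i}-s_{m+1-j}$ mod $q_2$, which is a \emph{non}-residue since $m+1-i>m+1-j$. Hence $b_i-b_j$ is not a QR modulo $q_1q_2$; applying $\chi_i(-1)=-1$ on both coordinates gives that $b_j-b_i$ is of type $(\text{NR},\text{QR})$ and therefore also not a QR modulo $q_1q_2$. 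Thus $(A-A)\cap R_{q_1q_2}=\{0\}$ and $|A|=m\ge\log q_2/\log 2$.

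The only ingredient beyond Cohen's proof of Lemma \ref{lem5.1} is the Erd\H{o}s--Moser transitive sub-tournament bound (the tournament analog of the Ramsey argument used there), together with the combinatorial observation that pairing the two transitive orderings in opposite directions forces every nonzero pairwise difference to be a quadratic residue modulo exactly one of the two primes. There is no substantive obstacle; the proof is short and self-contained.
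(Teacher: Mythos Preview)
Your proof is correct and essentially identical to the paper's: both apply the Erd\H{o}s--Moser transitive sub-tournament bound to the quadratic-residue tournaments on $\Z_{q_1}$ and $\Z_{q_2}$, then pair the two transitive chains in opposite order via the Chinese Remainder Theorem so that every nonzero difference is a residue modulo exactly one of $q_1,q_2$. The only cosmetic difference is that the paper writes the construction as pairs $(a^{(1)}_s,a^{(2)}_{k+1-s})$ in $\Z_{q_1}\times\Z_{q_2}$ rather than invoking CRT explicitly.
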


\begin{proof} 
For $j=1,2$, let $V_j=(\Z_{q_j},E_{q_j})$ be the tournament of quadratic residues modulo $q_j$, that is, the directed graph with the set of vertices $\Z_{q_j}$ and $\{a,b\}\in E_{q_j}$ iff $a-b\in R_{q_j}\setminus\{0\}$. It is well-known (\cite{St}; see also \cite{EM}) that any tournament on $n$ vertices contains a transitive subtournament of size $\lfloor\log n/\log2\rfloor+1$ (it follows from the fact that any tournament on $2^n$ vertices contains a transitive subtournament of size $n+1$, which can be proved by an easy induction). Applying this to $V_j$, we find two sets $A_1=\{a^{(1)}_1,...,a^{(1)}_k \}\subset \Z_{q_1}$ and $A_2=\{a^{(2)}_1,...,a^{(2)}_k \}\subset \Z_{q_2}$ of size $k\geq \log q_2/\log 2$ such that $a_s^{(j)}-a_t^{(j)}\in R_{q_j}$ for $1\leq s<t\leq k$ and $j=1,2$.	Then the set $A=\{(a^{(1)}_s, a^{(2)}_{k+1-s}) \}_{s=1}^k\subset\Z_{q_1q_2}$ is what we need, since $\left(\frac{-1}{q_j}\right)=-1$ for $j=1,2$. 
\end{proof}

Note that Graham and Ringrose \cite{GR} showed that the least quadratic non-residue $n(p)$ satisfies $n(p)\gg \log p \log\log\log p$ infinitely often, and one can expect that the same bound holds infinitely often for primes $p\equiv1\pmod{4}$ and $q\equiv3\pmod{4}$ separately. Then, as was mentioned in the introduction, we can use the sets $\xi\cdot\{1,...,n(p) \}$, where $\xi\in\Z_p\setminus R_p$ is any non-residue, instead of those we constructed in the proof of Lemma \ref{lem5.1}, and the sets $\{ (s,n+1-s)\}_{s=1}^{n}$, where $n=\min\{n(q_1),n(q_2)\}$ instead of those from Lemma \ref{lem5.2}.  As this bound on $n(p)$ applies only to a very sparse set of primes $p$,
using it would not affect our lower bound in Theorem \ref{th1.5}.

\begin{proof}[Proof of Theorem \ref{th1.5}]
Let $\omega_j(m,t)=\#\{p\le t: p|m,p\equiv j\pmod{4}\}$,  $j\in\{1,3\}$.
 Consider the set of $m\le x$ such that
 \begin{itemize}
 \item[(a)] $p^2|m$ implies that $p>\log x$;
 \item[(b)] $|\o_j(m,t)-0.5\log\log t|<(\log\log x)^{2/3} \quad (3\leq t\leq m, \, j\in\{1,3\})$.
 \end{itemize}
The number of $m$ failing (a) is $O(x/\log x)$.
Almost all integers satisfy (b), and this 
may be derived from Theorem 7.2 in Kubilius \cite{K},
upon taking $f$ to be the strongly additive function with 
$f(p)=1$ if $p\equiv j\pmod{4}$ and $f(p)=0$ otherwise.
Now suppose that $m\in (\sqrt{x},x]$ obeys (a) and (b).
For $p|m$ and $p>\log x$,
$p$ divides $m$ to the first power.
By Lemma \ref{lem5.1}, for such $p\equiv 1\pmod{4}$ there is a set $A_p\subset \Z_{p}$ with (\ref{0.1}) of size $\gg\log p$. Let $q_1>q_2>...$ be the primes $3\pmod{4}$ dividing $m$, greater than $\log x$; each of them also divides $m$ to the first power. By Lemma \ref{lem5.2}, there is a set $A_{q_{2j-1}q_{2j}}\subset\Z_{q_{2j-1}q_{2j}}$ with (\ref{0.1}) of size $\gg\log q_{2j}$. Further, it is easy to see that the set 
$$
A=\prod_{\substack{p|m\\ p\equiv1\pmod{4} \\ p>\log x}}A_p \times \prod_{\substack{q_{2j}|m\\ q_{2j}>\log x}}A_{q_{2j-1}q_{2j}}\subset \prod_{p|m}\Z_{p} \subseteq \Z_m 
$$         
obeys (\ref{0.1}). It remains to estimate $|A|$. By (b) with $t=m$, we get
\begin{align}\label{5.1}
\log|A|\geq \sum_{\substack{p|m, p>\log x\\ p\equiv1\pmod{4}}}(\log\log p - O(1)) + \sum_{\substack{q_{2j}|m, \\q_{2j}>\log x}}(\log\log q_{2j} - O(1)) \nonumber\\
= \sum_{\substack{p|m, p>\log x\\ p\equiv1\pmod{4}}}\log\log p + 0.5\sum_{\substack{q|m, q>\log x\\q\equiv3\pmod{4}}}\log\log q - O(\log\log x).
\end{align}
Using  Abel's summation technique, we find by (b) that
\begin{align*}
\sum_{\substack{p|m, \log x<p\le \sqrt{x}\\ p\equiv1\pmod{4}}}\log\log p
 &\ge  \o_1(m,\sqrt{x})\log\log \sqrt{x}-\o_1(m,\log x)\log\log\log x - \int_{\log x}^{\sqrt{x}}\frac{\o_1(m,u)du}{u\log u}\\
&=0.25 (\log\log x)^2 + O((\log\log x)^{5/3}).
\end{align*}
Analogously,
\begin{align*}
\sum_{\substack{q|m, \log x<q\le \sqrt{x}\\ q\equiv3\pmod{4}}}\log\log q
&\ge  0.25 (\log\log x)^2 + O((\log\log x)^{5/3}).
\end{align*}
The claim follows from (\ref{5.1}).
\end{proof}

\medskip

\textbf{Acknowledgements.}
The authors thank Sergei Konyagin for suggesting Lemma \ref{lem5.2}.
The first author is supported by National Science Foundation
Grant DMS-1802139. The work of the second author was performed at the Steklov International Mathematical Centre and supported by the Ministry of Science and Higher Education of the Russian Federation (agreement no. 075-15-2019-1614).


\begin{thebibliography}{99}
	
	
	\bibitem[BPPS]{BPPS}	{\sc A.\,Balog, J.\,Pelikan, J.\,Pintz, E.\,Szemer\'edi},
	\emph{Difference sets without $k$th powers}, Acta Math. Hungar. 65 (2), 1994, 165--187. 
	
	
	\bibitem[Bo]{Bo} {\sc J.D. Bovey}, 
\emph{On the size of prime factors of integers}, Acta Arith. 33, 1977, 65--80.
	
	
	\bibitem[Bu]{Bu} {\sc D.A.\,Burgess}, 
\emph{The distribution of quadratic residues and non-residues}, Mathematika, 4 (1957), 106--112.
	
	
	
	\bibitem[C]{C} {\sc S.D.\,Cohen},
	\emph{Clique numbers of Paley graphs}, Quaestions Math., 11 (2), 1998, 225--231.
	
	
	
   \bibitem[E]{E} {\sc P.\,Erd\"os}, 
   \emph{On some properties of prime factors of integers}, Nagoya Mathematical Journal, 27 (2), 1966, pp. 617--623.
	
	\bibitem[EM]{EM} {\sc P.\,Erd\"os, L.\,Moser}, 
	\emph{On the representation of directed graphs as unions of orderings}, Magyar Tud. Akad. Mat. Kutató Int. Közl., 1964, 9: 125--132.
	
		
	\bibitem[F]{F} {\sc K. Ford},
	\emph{ Joint Poisson distribution of 
	prime factors in sets}, preprint. 
	\texttt{arXiv:2006.12650}.
	
	\bibitem[G]{G} {\sc	M.R.\,Gabdullin},
	\emph{ Sets in $\Z_m$ whose difference sets avoid squares}, Sb. Math., 209:11 (2018), 1603--1610.
	
	
	\bibitem[GR]{GR} {\sc S.\,Graham, C.\,Ringrose}, 
	\emph{ Lower bounds for least quadratic non-residues}, Analytic number theory (Allterton Park, IL, 1989), 269--309.
	
    	
    	
	\bibitem[HT]{HT} {\sc R.R.\,Hall, G.\,Tenenbaum},
	\emph{Divisors}, Cambridge
	Tracts in Mathematics, Vol. 90. Cambridge University Press,
	Cambridge, New York, New Rochelle, Melbourne, and Sydney,
	1988, xvi + 167 pp.


    \bibitem[HP]{HP} {\sc B.\,Hanson and G.\,Pertidis}, 
    \emph{Refined estimates concerning sumsets contained in the roots of unity}, 
    Proc. Lond. Math. Soc. (3), 121(2), 2020, 287--292. 


     \bibitem[HR]{HR} {\sc G. H. Hardy and S. Ramanujan},
    \emph{The normal number of prime factors of a number $n$},
    Quart. J. Math. Oxford {\bf 48}, 1917, 76--92.

       

	
	\bibitem[K]{K} {\sc J. Kubilius},
\emph{Probabilistic methods in the theory of numbers},
Translations of Mathematical Monographs, Vol. 11, American Mathematical Society,
 Providence, R.I. 1964.


	
	
	\bibitem[La]{La} {\sc E. Landau}, 
	\emph{Handbuch der Lehre von der Verteilung der Primzahlen}, 2 B\"ande, B. G. Teubner, Leipzig-Berlin, 1909, x+ix+961 pp.
	
	
	\bibitem[Le]{Le} {\sc M. Lewko},
	\emph{ An improved lower bound related to the Fursternberg-S\'ark\"ozy theorem}, 
	Electron. J. Comb. 22 (1), 2015, 32. 
	
	\bibitem[Li]{Li} {\sc Yu.V.\,Linnik}, 
	\emph{ A remark on the least quadratic non-residue}, (Russian)
	C. R. (Doklady) Acad. Sci. URSS (N.S.), 36 (1942), 119--120.
	
		\bibitem[MR]{MR} {\sc M.\,Matolcsi, I. Z. \,Ruzsa}, 
	\emph{ Difference sets and positive exponential sums II: Quadratic and cubic residues in cyclic groups}, preprint.  
	
	
    \bibitem[N]{N} 
    {\sc K. K. Norton}, \emph{On the number of restricted prime factors of
     an integer}, Illinois J. Math. {\bf 20}, 681--705.


	
	\bibitem[PSS]{PSS} {\sc J.\,Pintz, W.L.\,Steiger, E.\,Szemer\'edi},
	\emph{ On sets of natural numbers whose difference set contains no squares}, J. London Math. Soc. 1988, s2-37 (2), 219--231.
	
	
	\bibitem[R]{R}	{\sc I. Z.\,Ruzsa}, 
	\emph{ Difference sets without squares}, Periodica Mathematica Hungarica, Vol. 15 (3), 1984, 205--209.
	
	

	
	\bibitem[Sa]{Sa} {\sc A.\,S\'ark\"ozy}, 
    \emph{ On difference sets of integers}, I, Acta Math. Acad. Sci. Hungar. 31 (1978), 125--149.

    \bibitem[St]{St} {\sc R.\, Stearns} 
    \emph{The voting problem}, Amer. Math. Monthly 66 (1959) 761--763.

	
	\bibitem[TV]{TV} {\sc T.\,Tao and V.\,Vu},
	\emph{ Additive combinatorics}, in Cambridge Stud. Adv. Math (Cambridge Univ. Press, Cambridge 2006), Vol. 105.
	
	
	\bibitem[V]{V} {\sc	I. M. Vinogradov},
	\emph{ On the distribution of quadratic residues and non-
	residues}, (in Russian), Journal of the Physico-Mathematical Society of
	Perm University, 1919, 2, 1-16.
	
   \bibitem[W]{W} {\sc K. S. Williams}, 
   \emph{Merten's theorem for arithmetic progressions},
    Journal of Number Theory 6 (5), 1974, 353--359.
  
	\bibitem[Y]{Y} {\sc K. Younis},
	\emph{Lower bounds in the polynomial Szemer\'edi theorem},
	available at https://arxiv.org/abs/1908.06058. 
	 
\end{thebibliography}
\end{document}